\newtheorem{theorem}{Theorem}[section]
\newtheorem{lemma}[theorem]{Lemma}
\newtheorem{proposition}[theorem]{Proposition}
\newtheorem{definition}[theorem]{Definition}
\newtheorem{corollary}[theorem]{Corollary}
\newtheorem{remark}[theorem]{Remark}
\newenvironment{prf} {{\bf Proof.}}{\hfill $\Box$}
\begin{document}
\title{Vanishing moment conditions for wavelet atoms in higher dimensions}
\author{ Hartmut F\"uhr\\
\footnotesize\texttt{{fuehr@matha.rwth-aachen.de}} } \maketitle
\begin{abstract}
We provide explicit criteria for wavelets to give rise to frames and atomic decompositions in ${\rm L}^2(\mathbb{R}^d)$, but also in more general Banach function spaces. We consider wavelet systems that arise by translating and dilating the mother wavelet, with the dilations taken from a suitable subgroup of ${\rm GL}(\mathbb{R}^d)$, the so-called {\em dilation group}.The paper provides a unified approach that is applicable to a wide range of dilation groups, thus giving rise to new atomic decompositions for homogeneous Besov spaces in arbitrary dimensions, but also for other function spaces such as shearlet coorbit spaces. 

The atomic decomposition results are obtained by applying the coorbit theory developed by Feichtinger and Gr\"ochenig, and they can be informally described as follows: Given a function $\psi \in {\rm L}^2(\mathbb{R}^d)$ satisfying fairly mild decay, smoothness and vanishing moment conditions, {\em any} sufficiently fine sampling of the translations and dilations will give rise to a wavelet frame. Furthermore, the containment of the analyzed signal in certain smoothness spaces (generalizing the homogeneous Besov spaces) can be decided by looking at the frame coefficients, and convergence of the frame expansion holds in the norms of these spaces.  We motivate these results by discussing nonlinear approximation. 
\end{abstract}

\noindent {\small {\bf Keywords:} square-integrable group representation; continuous wavelet transform; coorbit spaces; Banach frames; irregular wavelet frames; vanishing moments; nonlinear approximation; shearlets; anisotropic wavelet systems}

\noindent{\small {\bf AMS Subject Classification:} 42C15; 42C40; 46E35}

\section{Introduction}\label{introduction}

%
%

The great success of wavelet theory in applications largely depends on two features: Approximation-theoretic properties of wavelet orthonormal bases, and the availability of fast algorithms in the discrete-domain setting. There are many facets to the approximation-theoretic properties; in this exposition I will focus mainly on a particularly appealing set of results, namely the wavelet characterization of homogeneous Besov spaces. Given a wavelet orthonormal basis $(\psi_{j,k})_{j,k \in \mathbb{Z}} \subset {\rm L}^2(\mathbb{R})$, every function $f \in {\rm L}^2(\mathbb{R})$ has the expansion 
\begin{equation}
\label{eqn:wavelet_onb_exp}
f = \sum_{j,k} \langle f, \psi_{j,k} \rangle \psi_{j,k}~,
\end{equation} with a square-summable coefficient family $(\langle f, \psi_{j,k} \rangle)_{j,k \in \mathbb{Z}}$. More importantly however, (\ref{eqn:wavelet_onb_exp}) converges in smoothness spaces such as the homogeneous Besov spaces $\dot{B}_{p,q}^s(\mathbb{R})$, as soon as $f$ belongs to that space. In fact, the wavelet system is an unconditional basis of that space, and membership of $f \in \dot{B}_{p,q}^s(\mathbb{R})$ is equivalent to a suitable weighted $\ell^{p,q}$-summability condition on the coefficients (thus can be decided just by looking at the coefficients). Thus the wavelet decomposition is valid {\em simultaneously} in a whole range of smoothness spaces, and this observation provides a solid theoretic foundation for the development and study of algorithms for applications like denoising or compression. (See subsection \ref{subsect:nonlin} for a discussion of nonlinear approximation.) 

In this paper, we wish to extend these results to arbitrary dimensions, replacing dyadic dilations in dimension one by certain rather general groups $H$ of (typically nonscalar) dilations; $H$ is called the {\em dilation group}. The rationale for choosing $H$ is explained in more detail in the next section. This is a rather general setting with a lot of different groups to choose from, including the similitude group in arbitrary dimensions, diagonal groups, but also the shearlet group that has received considerable attention in the past few years; see \cite{shearlet_book} for an introduction.  

For arbitrary dilation groups, the existence of associated orthonormal wavelet bases is not established, and we will thus be concerned with frame rather than ONB expansions. The frame expansions will be obtained as discretization of the {\em continuous wavelet transform} associated to the matrix group $H$, by applying the coorbit theory established by Feichtinger and Gr\"ochenig, see \cite{FeiGr0,FeiGr1,FeiGr2,Gr}. The previous paper \cite{Fu_coorbit} established that coorbit theory applies to a large class of dilation groups and their associated continuous wavelet transforms, and thus provides the existence of a consistently defined scale of Besov-type function spaces, atomic decompositions in terms of bandlimited Schwartz functions, etc. Furthermore, \cite{Fu_coorbit} developed vanishing moment criteria for analyzing windows, which mimic the simple criteria for wavelet ONB's mentioned above. 

In principle, coorbit theory provides a description of ``nice'' wavelets that can be used to obtain simultaneous atomic decompositions for a whole range of Besov-type spaces; this was used in \cite{Fu_coorbit} to show the existence of band-limited atoms for this setting. However, both for practical and theoretical considerations, one would also like to replace bandlimited atoms by, say, compactly supported ones. This raises the challenge of finding explicit and easily fulfilled criteria for ``nice'' wavelets with compact support (or, more generally, with certain decay and/or integrability conditions), and it is the chief purpose of this paper to provide such criteria for general dilation groups. 

As a result, we will obtain a very general approach to the construction of wavelet frames in higher dimensions, with very mild conditions on the wavelets to be chosen, and a large class of dilation groups to choose from. The latter aspect is of particular relevance for the ongoing search for anisotropic wavelet systems designed to resolve singularities in dimensions two and higher, such as the shearlet systems. 

While the construction of wavelet frames and bases is by now very well understood (at least for some groups), constructing such systems, and guaranteeing their properties, is still fairly cumbersome. ONB's are typically constructed from a multiresolution analysis, and their existence has been established (to my present knowledge) only for very few classes of dilations, such as dyadic dilations. In the construction of frames, the sampling set used to discretize shifts and dilations is typically fixed beforehand, and then the frame (or ONB) generators are constructed for this particular choice of sampling set, see \cite{BeTa} for the prototype. Furthermore, the constructions for concrete groups, as in the shearlet case, are typically taylored to the specific structure of the dilation group. By contrast, the discretization methods developed in \cite{FeiGr0,FeiGr1,FeiGr2} start from a given ``nice'' wavelet and yield that the action of {\em any} sufficiently dense uniformly discrete sampling set contained in the 
underlying affine group will give rise to a frame. The price one pays for this generality lies in the absence of explicit sampling densities and frame bounds. It is the chief contribution of this paper to provide explicit and easily verifiable criteria that allow to spot a nice wavelet. 

\subsection{Nonlinear approximation using wavelet frames}
\label{subsect:nonlin}

To illustrate the results in our paper, let us take a closer look at the problem of nonlinear approximation. The following discussion loosely follows \cite[Section 4.4]{DaKuStTe}. Let 
$(\psi_\lambda)_{\lambda \in \Lambda} \subset \mathcal{H}$ denote any system of vectors in a Hilbert space $\mathcal{H}$. We define the associated nonlinear approximation error by 
\begin{equation} \label{eqn:approx_error}
 E_n(f; (\psi_\lambda)_{\lambda}) = \inf \left\{ \left\| f - \sum_{\lambda \in \Lambda'} c_\lambda \psi_\lambda \right\|_{\mathcal{H}} ~:~ c_\lambda \in \mathbb{C}~,~|\Lambda'| \le n \right\} ~.\end{equation}

A famous result in wavelet approximation theory states that the nonlinear approximation error of $f \in {\rm L}^2(\mathbb{R})$ in a wavelet orthonormal basis $(\psi_{j,k})_{j,k \in \mathbb{Z}} \subset {\rm L}^2(\mathbb{R})$ fulfills
\[
\sum_{n=1}^\infty n^{-p/2} E_n(f; (\psi_{j,k}))^p < \infty
\] for some $0 \le p< 2$ iff $f \in \dot{B}_{p,p}^{1/p-1/2}(\mathbb{R})$; see e.g. \cite{DVJaPo} for a much more complete discussion. Note that finiteness of the sum can be understood as a sort of decay condition on the approximation error, which becomes sharper as $p$ decreases. 

There are however some conditions that a wavelet has to meet: The precise range of homogeneous Besov spaces for which the above equivalence is valid depends on properties of the wavelet, typically formulated in terms of  decay, smoothness and vanishing moments. However, it should be stressed that these conditions are fairly easy to verify, and the existence of wavelet ONB's fulfilling them to any prescribed (finite) order has been established early on. 

If one replaces bases by frames, the above sharp characterization of nonlinear approximation rate and $p$-summability of the coefficients no longer holds; however, at least one direction remains intact. The proof of the following proposition follows by the exact same reasoning as in \cite[Theorem 4.9]{DaKuStTe}. 
\begin{proposition} \label{lem:nl_approx_frame}
 Let $(\psi_\lambda)_{\lambda \in \Lambda}$ denote a frame in the Hilbert space $\mathcal{H}$. Given $f \in \mathcal{H}$, let 
 \[
 f  =  \sum_{\lambda \in \Lambda} c_\lambda \psi_\lambda
 \] with suitable coefficients $(c_\lambda)_{\lambda \in \Lambda}$. Let $1  \le p < 2$. Then there exists a constant $C$ depending on $p$ and the frame constants such that if $(c_\lambda)_{\lambda \in \Lambda} \in \ell^p(\Lambda)$, one has
 \[
 \left( \sum_{n=1}^\infty n^{-p/2} E_n(f; (\psi_\lambda))^p \right)^{1/p} \le C \| (c_\lambda)_\lambda \|_p~.  
 \]
\end{proposition}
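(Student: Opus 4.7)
The plan is to bound $E_n(f)$ by the $\ell^2$-tail of the decreasingly rearranged coefficient sequence, and then deduce the required weighted summability via a Hardy-type inequality. Write $(c_k^*)_{k\ge 1}$ for the decreasing rearrangement of $(|c_\lambda|)_{\lambda\in\Lambda}$ and let $(\lambda_k^*)_{k\ge 1}$ be a corresponding enumeration of the indices. The greedy $n$-term approximation $f_n := \sum_{k=1}^n c_{\lambda_k^*} \psi_{\lambda_k^*}$ satisfies, by the upper frame bound $B$,
\[
E_n(f)^2 \le \|f-f_n\|^2 = \Big\|\sum_{k>n} c_{\lambda_k^*} \psi_{\lambda_k^*}\Big\|^2 \le B\sum_{k>n} (c_k^*)^2 =: B\,\sigma_n^2 .
\]
Hence it suffices to establish the Hardy-type inequality $\sum_{n\ge 1} n^{-p/2}\sigma_n^p \le C_p\|c\|_p^p$.

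For that inequality I would perform a dyadic blocking. Since $\sigma_n$ is decreasing in $n$, grouping $n\in[2^j,2^{j+1})$ yields
\[
\sum_{n\ge 1} n^{-p/2}\sigma_n^p \le \sum_{j\ge 0} 2^{j(1-p/2)}\sigma_{2^j}^p .
\]
Splitting $\sigma_{2^j}^2 = \sum_{k>2^j}(c_k^*)^2$ into the dyadic layers $k\in(2^m,2^{m+1}]$ and bounding each $c_k^*$ by $c_{2^m}^*$ via monotonicity gives $\sigma_{2^j}^2 \le \sum_{m\ge j} 2^m (c_{2^m}^*)^2$. The decisive move is then the elementary subadditivity $(\sum_m x_m)^{p/2} \le \sum_m x_m^{p/2}$, valid because $p/2 \le 1$ under the hypothesis $p<2$; this produces $\sigma_{2^j}^p \le \sum_{m\ge j} 2^{mp/2}(c_{2^m}^*)^p$. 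Interchanging the order of summation, the inner geometric series $\sum_{j\le m} 2^{j(1-p/2)}$ is controlled by $C_p\,2^{m(1-p/2)}$ (here $1-p/2>0$ is used), which combines with $2^{mp/2}$ to yield exactly $2^m$. A standard dyadic comparison $\sum_m 2^m(c_{2^m}^*)^p \le C\|c\|_p^p$, once more via monotonicity of $(c_k^*)$, then closes the estimate with a constant depending only on $p$ and $B^{1/2}$.

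The main obstacle is the Hardy-type inequality itself. The naive Stechkin pointwise bound $\sigma_n \le n^{1/2-1/p}\|c\|_p$ is tight but yields only $\sum_n n^{-p/2}\sigma_n^p \le \|c\|_p^p\sum_n n^{-1}$, which diverges logarithmically; one really has to sum before estimating. The dyadic subadditivity step leverages both the monotonicity of $(c_k^*)$ and the strict inequality $p<2$, and indeed the claim would fail at $p=2$, as the sequence $c_k^*=k^{-1/2}(\log k)^{-1}$ illustrates.
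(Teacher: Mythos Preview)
Your argument is correct: the synthesis bound $E_n(f)\le B^{1/2}\sigma_n$ followed by the dyadic Hardy inequality for the tails is the standard route to this approximation-space embedding, and every step you sketch (dyadic blocking, subadditivity of $t\mapsto t^{p/2}$ for $p<2$, summation of the geometric series using $1-p/2>0$, and the comparison $\sum_m 2^m(c_{2^m}^*)^p\preceq\|c\|_p^p$) goes through as stated. The paper itself does not give a proof but simply defers to \cite[Theorem 4.9]{DaKuStTe}, so there is no in-paper argument to compare against.
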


%

Using this observation, we can formulate an application of the results in this paper to homogeneous Besov spaces. The chief difference to the existing results (as far as I know them) is the great freedom in choosing the analysing function: {\em Any} reasonably regular function fulfilling explicit decay and vanishing moment conditions will give rise to a wavelet frame with properties analogous to wavelet ONB's, as long as the sampling in time, scale and rotation is sufficiently fine. For the proof, we refer to Remark \ref{rem:proof_ex_1_2} below. Note that the theorem employs the usual notations $x^j = x_1^{j_1}\ldots x_d^{j_d}$ for vectors $x \in \mathbb{R}^d$ and multiindices $j \in \mathbb{N}_0^d$, as well as $|j| = \sum_{i=1}^d j_i$. 
\begin{theorem} \label{thm:ex_Besov}
Let $k \in \mathbb{N}$. Assume that $\psi \in {\rm L}^2(\mathbb{R}^d) \cap C^{2k}(\mathbb{R}^d)$ fulfills
 \[
\forall 0 \le |j| <k ~:~  \int_{\mathbb{R}^d} x^j \psi(x) dx = 0~,~ 
 \]
 with absolute convergence. Assume further that all partial derivatives of $x^\beta \psi$ of order up to $k$ are integrable, for all multiindices $\beta$ of length $\le k$. 
 
 If $k>\frac{11}{2}d+3$, there exist $\epsilon>0$ and a neighborhood $U \subset SO(d)$ of the identity matrix, both depending only on $\psi$, such that for all $\delta_1,\delta_2< \epsilon$ and all finite sets $\{ h_1,\ldots, h_r \} \subset SO(d)$ satisfying 
 \[
  SO(d) = \bigcup_{\ell=1}^r h_i U~,
 \]
 the wavelet system
  \[
   (\psi_{j,k,\ell})_{j,k \in \mathbb{Z}^d,\ell=1,\ldots, d} ~\mbox{with}~\psi_{j,k,\ell}(t) = (1+\delta_1)^{j/2} \psi((1+\delta_1)^j h_\ell^{-1} t- \delta_2 k)
  \] is a frame for ${\rm L}^2(\mathbb{R}^d)$.
  Furthermore, we find for any $f \in {\rm L}^2(\mathbb{R}^d)$ that $f \in \dot{B}_{p,p}^{d/2-d/p}(\mathbb{R}^d)$ iff 
  the coefficient family $(\langle f, \psi_{j,k,\ell} \rangle)_{j,k \in \mathbb{Z},\ell=1,\ldots,r}$ is $p$-summable. In this case, 
  the approximation error $E_n(f;(\psi_{j,k,\ell})_{j,k,\ell})$ fulfills
 \[  \left( \sum_{n=1}^\infty n^{-p/2} E_n(f;(\psi_{j,k,\ell})_{j,k,\ell})^p \right)^{1/p}\le  C \| f \|_{\dot{B}_{p,p}^{d/2-d/p}}~.\]
\end{theorem}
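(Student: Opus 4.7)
The proof reduces the statement to the coorbit/discretization machinery of Feichtinger--Gr\"ochenig, specialized in \cite{Fu_coorbit} to general matrix dilation groups and applied here to the similitude group $H = \mathbb{R}^+\cdot SO(d)\subset \mathrm{GL}(\mathbb{R}^d)$. The associated affine group $G = \mathbb{R}^d\rtimes H$ carries the usual quasi-regular representation, and its coorbit spaces attached to suitably weighted mixed Lebesgue spaces on $G$ coincide with the homogeneous Besov scale $\dot{B}_{p,q}^s(\mathbb{R}^d)$; the target space $\dot{B}_{p,p}^{d/2-d/p}$ corresponds, after the normalization built into $\psi_{j,k,\ell}$, to the diagonal coorbit $\mathrm{Co}\,L^p(G)$. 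The overall plan is: (a) verify that $\psi$ lies in the reservoir $\mathcal{A}_w$ of ``nice'' analyzing vectors isolated in \cite{Fu_coorbit}; (b) invoke the abstract discretization theorem to obtain the frame property and the $p$-summability characterization; (c) plug the atomic-decomposition coefficients into Proposition \ref{lem:nl_approx_frame} for the nonlinear approximation bound.

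Step (a) is the main obstacle and amounts to bookkeeping of decay exponents on $G$. What has to be estimated is the ambiguity function $V_\psi\psi(x,h)=\langle\psi,\pi(x,h)\psi\rangle$ (and an analogous quantity against a fixed Schwartz reference window), weighted by the control weight $w$ that encodes the whole Besov scale. Since $SO(d)$ is compact, $w$ reduces to a power of the scale $|\det h|^{1/d}$, so only the scale and translation behaviour of $V_\psi\psi$ enters. The hypothesis of $k$ vanishing moments with absolute convergence supplies the required decay of $V_\psi\psi$ as the scale tends to $0$ via the usual Taylor expansion of $\psi$; integrability of the partial derivatives of $x^\beta\psi$ of order $\le k$ for $|\beta|\le k$ translates, via Fourier transform, into simultaneous polynomial decay and smoothness of $\widehat\psi$ yielding the requisite decay in the large-scale and translation regimes. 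The quantitative threshold $k>\tfrac{11}{2}d+3$ is dictated by the polynomial order of $w$, the Jacobian factors arising from dilation, and the margin needed for absolute integrability in both scale regimes; assembling these bounds is precisely where the general criteria developed earlier in the paper are applied, as announced in Remark \ref{rem:proof_ex_1_2}.

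Given step (a), the Feichtinger--Gr\"ochenig discretization theorem furnishes a neighborhood $U_G$ of the identity in $G$ such that the orbit of $\psi$ under any $U_G$-dense uniformly discrete subset of $G$ is a Banach frame for the whole coorbit scale simultaneously, together with an atomic decomposition whose coefficient sequence depends linearly and boundedly on $f$. Since $U_G$ splits as a product across translations, scalings, and rotations, containment of the sampling family in $U_G$ is controlled precisely by the three parameters $\delta_2$, $\delta_1$ and the cover $SO(d) = \bigcup_\ell h_\ell U$ of the statement; thus for $\delta_1,\delta_2<\epsilon$ the system $(\psi_{j,k,\ell})$ is a frame for ${\rm L}^2(\mathbb{R}^d)$ and a Banach frame for the entire Besov scale, and $p$-summability of $(\langle f,\psi_{j,k,\ell}\rangle)$ is equivalent to $f\in\dot{B}_{p,p}^{d/2-d/p}(\mathbb{R}^d)$ with norm equivalence. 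Feeding the atomic-decomposition coefficients (which satisfy $\|c\|_{\ell^p}\le C\|f\|_{\dot{B}_{p,p}^{d/2-d/p}}$) into Proposition \ref{lem:nl_approx_frame} then gives the announced $E_n$-bound.
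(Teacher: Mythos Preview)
Your overall route is the paper's own: identify $\dot B_{p,p}^{d/2-d/p}$ with $\mathrm{Co}\,L^p(G)$ for the similitude group, check that $\psi$ is a ``nice'' wavelet, invoke Theorem~\ref{thm:at_dec} for the frame and the $\ell^p$-characterization, and finish with Proposition~\ref{lem:nl_approx_frame}. Two points of precision are worth correcting. First, the object you must land in is $\mathcal{B}_{v_0}$, not $\mathcal{A}_{v_0}$: membership in $\mathcal{A}_{v_0}$ only defines the coorbit space, whereas the discretization theorem requires the stronger Wiener-amalgam condition $\mathcal{W}_\psi\psi\in W^R(C^0,L^1_{v_0})$. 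Second, the paper does not obtain this via a direct ``Taylor expansion'' estimate of $V_\psi\psi$; it applies Theorem~\ref{thm:main}, whose hypothesis is precisely that the dual orbit $\mathcal{O}=\mathbb{R}^d\setminus\{0\}$ is strongly $(s,w_0)$-temperately embedded. That verification is Theorem~\ref{thm:temp_embed_similitude}: one computes the envelope $A(\xi)=\min(|\xi|,(1+|\xi|)^{-1})$, bounds $\Phi_\ell(rS)\preceq \min(r^{\ell-d},r^{d-\ell})$, and checks integrability against the weight $m$. The bound $k>\tfrac{11}{2}d+3$ then drops out of Remark~\ref{rem:proof_ex_1_2} by combining this index with the control-weight estimate $w_0(rS)\preceq (r+r^{-1})^{2d}$ for $Y=L^p(G)$ (here $s=0$ and $\alpha=d/2-d/p$). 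Your sketch is compatible with this, but as written it understates that the Wiener-amalgam bound --- not merely weighted $L^1$-integrability --- is the operative condition, and that the threshold on $k$ comes from the explicit $\Phi_\ell$-computation rather than from generic bookkeeping.
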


Observe that is easy to construct functions $\psi$ as in the theorem: Simply pick a function $\rho$ with suitable decay in all derivatives of order up to $2k$, and differentiate $k$ times. If the function $\psi$ is isotropic, the wavelet transform is constant on $SO(d)$-cosets, and the rotations $h_1,\ldots,h_r$ can be omitted from the theorem. Isotropic wavelets are constructed by picking a suitable isotropic function and applying the Laplacian sufficiently often.  

The (relevant) case $0 < p < 1$ is excluded here, chiefly because the results established in this paper and the precursor \cite{Fu_coorbit} are confined to coorbit spaces associated to Banach (rather than quasi-Banach) spaces. The extension to quasi-Banach spaces is the subject of ongoing research. 

 As far as I am aware, the theorem is new, even for dimension one. The sampling set in the theorem was chosen as regular grid mostly for the sake of notational convenience; the underlying sampling theorems due to Feichtinger and Gr\"ochenig theory allow much more general sampling sets, see \ref{thm:at_dec} below. For these irregular sampling sets, the Fourier techniques typically used to derive frame characterizations of homogeneous Besov spaces such as the $\varphi$-transform \cite{FJW} fail, whereas an analog of Theorem \ref{thm:ex_Besov} is still available. 
 
 The literature on irregular wavelet frames contains certain analogs to \ref{thm:ex_Besov}, usually restricted to the ${\rm L}^2$-case. To my knowledge, the paper \cite{FeSu} is closest to the aims and scope of the present article, but it treats only isotropic dilations. Another paper worthwhile mentioning in this context is \cite{SuZh}. By comparison to the results in those papers, the condition $k > \frac{11}{2} d+ 3$ from the theorem seems quite restrictive. Note however that our condition is also sufficient  for anisotropic wavelets, and the theorem points out that the wavelet system generated by isotropically dilating and translating a suitable finite number of rotations of such a wavelet will again yield a Banach frame. 
 
 To some degree, additional restrictions on the wavelets are to be expected from an approach that aims at treating large classes of dilation groups and function spaces other than ${\rm L}^2$ in a unified perspective. As a rule, the explicit conditions on wavelets that are determined in this paper are derived as a proof of principle, and do not have any claim to optimality.  We refer to Remark \ref{rem:suboptimal} for more detailed comments on this issue. 
 
 There is another direction in which the coorbit view provides a considerable extension of the known results: Note that the above statement arises from a particular choice of dilation group, namely the so-called {\em similitude group} $H = \mathbb{R}^+ \cdot SO(d)$. As will be demonstrated in Section \ref{sect:examples}, the results in this paper apply to a much larger variety of groups, including the shearlet dilation group studied, e.g., in \cite{DaKuStTe,DaStTe10,DaStTe11, shearlet_book}. The first compactly supported shearlet frames for coorbit spaces other than ${\rm L}^2(\mathbb{R}^2)$ were introduced fairly recently in \cite{DaStTe11}, and their construction is considered an important step both for applied and theoretical purposes (e.g., for the derivation of trace theorems); see also \cite{KiKuLi} for the related case of cone-adapted shearlet frames. The methods developed in this paper provide an analog of Theorem \ref{thm:ex_Besov} also for the shearlet setting: Here, the pertinent notion of 
vanishing moments is
\begin{equation} \label{eqn:van_mom_shearlet}
\forall  0 \le |j| < k~,~\forall \xi_2 \in \mathbb{R}~:~ \int_{\mathbb{R}^2} x^j  \psi(x) e^{-2 \pi i \langle \xi_2, x_2\rangle} dx =  0~,
\end{equation}
with absolute convergence of the integrals. Any function that possesses sufficiently many vanishing moments and integrable partial derivatives, under the action of a sufficiently dense sampling set, will then give rise to a frame; and it is very easy to produce compactly supported functions fulfilling these conditions. For comparison, \cite[Corollary 3.3]{DaStTe11} imposes Fourier-side decay conditions that in fact follow from regularity of $\psi$ and the vanishing moment conditions (\ref{eqn:van_mom_shearlet}). Thus there is an obvious similarity between the cited result and the criteria derived in this paper; however, \cite[Corollary 3.3]{DaStTe11} in addition requires compact support. 

Again, a priori estimates of the decay of the nonlinear approximation error are available, where the role of the Besov spaces is taken over by the shearlet coorbit spaces $Co ({\rm L^p}(G))$; this fact has already been pointed out in \cite{DaKuStTe}. It turns out that imposing vanishing moments of order $k \ge 127$ (as defined by (\ref{eqn:van_mom_shearlet})), and partial derivatives of to order up to $k$ with suitable decay will allow to formulate a precise shearlet analog of Theorem \ref{thm:ex_Besov}; see Remark \ref{rem:van_mom_shearlet} below. 
(When pondering the fairly astronomical number of 127 vanishing moments, please recall the above disclaimer concerning optimality of the constants derived in this paper.) 

\subsection{A short overview of the paper}

The present paper is a continuation of \cite{Fu_coorbit}. The chief purpose of both papers is to make certain useful but abstract notions from coorbit theory explicit for the concrete case of wavelet systems arising from the action of an affine group generated by the translations and a suitable closed group $H< {\rm GL}(\mathbb{R}^d)$ of dilations. The key object that coorbit theory provides for the construction of atomic decompositions is the space $\mathcal{B}_{v_0} \subset {\rm L}^2(\mathbb{R}^d)$; essentially, this is the set of ``nice'' wavelet for which analogs of Theorem \ref{thm:ex_Besov} can be formulated. The formal definition of this space is fairly technical, see Section \ref{sect:coorbit} below, and it is the chief contribution of this paper to provide concrete and easily verified sufficient criteria for elements of $\mathcal{B}_{v_0}$. 

The paper is structured as follows: Section \ref{sect:coorbit} contains the necessary notions and results concerning continuous wavelet transforms in higher dimensions. Coorbit theory is based on the theory of square-integrable irreducible representations, and for the setup of an affine group acting on ${\rm L}^2(\mathbb{R}^d)$ in the natural way, it is very well understood, which dilation groups provide such representations. To any such group, there exists an associated {\em open dual orbit} $\mathcal{O}$, which will play a crucial role in the paper. Section \ref{sect:coorbit} also contains the necessary ingredients of coorbit theory required, in particular the definition and basic properties of the spaces $Co(Y)$ and the spaces $\mathcal{A}_{v_0}$ and $\mathcal{B}_{v_0}$ of analyzing vectors and ``nice'' wavelets, respectively. 

Section \ref{sect:main} contains the central result of this paper. Theorem \ref{thm:main} contains a sufficient criterion for nice wavelets in terms of vanishing moments. Here the proper notion of vanishing moments is crucial: A function has vanishing moments iff its Fourier transform vanishes to a certain order on the complement $\mathcal{O}^c$ of the open dual orbit. The latter subset can best be understood as the ``blind spot'' of the wavelet transform, containing those frequencies which the wavelet transform cannot resolve well. (This set is of measure zero, hence the existence of the blind spot is no contradiction to the wavelet inversion formula.) 

Theorem \ref{thm:main} does not come entirely for free: There is still one obstacle to its applicability, encapsulated in the notion of {\em strongly temperately embedded dual orbit}, see Definition \ref{defn:str_temperately_embed}. I therefore investigate, for various classes of groups, whether this condition is fulfilled. For all groups that were considered, including diagonal and similitude groups in arbitrary dimensions, as well as the shearlet group and a family of close relatives, the answer is yes. As a result one obtains concrete criteria which considerably generalize the known results. 

\section{Coorbit spaces over general dilation groups} \label{sect:coorbit}

First some notation: 
 Given $f \in {\rm L}^1(\mathbb{R}^d)$, its Fourier transform is defined as
\[
 \mathcal{F}(f)(\xi) := \widehat{f}(\xi) := \int_{\mathbb{R}^d} f(x) e^{-2\pi i \langle x,\xi \rangle} dx~,
\] with $\langle \cdot, \cdot \rangle$ denoting the euclidean scalar product on $\mathbb{R}^d$. We will use the same symbol $\mathcal{F}$ for the Fourier transform of tempered distributions.
 For any subspace $X \subset
\mathcal{S}'(\mathbb{R}^d)$, we let $\mathcal{F}^{-1} X$ denote its
inverse image under the Fourier transform.

 In order to avoid cluttered notation, we will occasionally use
the symbol $ X \preceq Y$ between expressions $X,Y$ involving one or
more functions or vectors in $\mathbb{R}^d$,  to indicate the existence of a constant $C>0$,
independent of the functions and vectors occurring in $X$ and $Y$, such that
 $X \le CY$. We let $|\cdot|: \mathbb{R}^d \to
\mathbb{R}$ denote the euclidean norm.  Given a matrix $h \in \mathbb{R}^{d \times d}$, the operator norm
of the induced linear map $(\mathbb{R}^d, |\cdot|) \to  (\mathbb{R}^d, |\cdot|)$ is denoted by
 $\| h \|_\infty$. By a slight abuse of notation we will also use $|\alpha| = \sum_{i=1}^d \alpha_i$ for multiindices $\alpha \in \mathbb{N}_0^d$.  

For $r,m>0$, we let
\[
| f |_{r,m} = \sup_{x \in \mathbb{R}^d, |\alpha| \le r} (1+|x|)^{m}
|\partial^\alpha f (x)|~.
\] denote the associated Schwartz norm of a function $f: \mathbb{R}^d \to \mathbb{C}$ with suitably many partial derivatives.

Let us now describe the necessary notions connected to continuous wavelet transforms. 
We fix a closed matrix group $H < {\rm
GL}(d,\mathbb{R})$, the so-called {\bf dilation group}, and let $G =
\mathbb{R}^d \rtimes H$. This is the group of affine
mappings generated by $H$ and all translations. Elements of $G$ are
denoted by pairs $(x,h) \in \mathbb{R}^d \times H$, and the product
of two group elements is given by $(x,h)(y,g) = (x+hy,hg)$. 
The left Haar measure of $G$ is given by $d(x,h) = |\det(h)|^{-1} dx dh$, and the modular
function of $G$ is given by $\Delta_G(x,h) = \Delta_H(h)
|\det(h)|^{-1}$.

$G$ acts unitarily on ${\rm L}^2(\mathbb{R}^d)$ by the {\bf quasi-regular
representation} defined by
\begin{equation} \label{eqn:def_quasireg}
[\pi(x,h) f](y) = |{\rm det}(h)|^{-1/2} f\left(h^{-1}(y-x)\right)~.
\end{equation}
We assume that $H$ is chosen such that $\pi$ is an {\bf (irreducible) square-integrable
representation}. Square-integrability of the representation means
that there exists at least one nonzero {\bf admissible vector} $\psi
\in {\rm L}^2(\mathbb{R}^d)$ such that the matrix coefficient
\[
(x,h) \mapsto \langle \psi, \pi(x,h) \psi \rangle
\] is in ${\rm L}^2(G)$, which is the ${\rm L}^2$-space associated
to a left Haar measure on $G$. In this case the
associated wavelet transform
\[
\mathcal{W}_\psi : {\rm L}^2(\mathbb{R}^d) \ni f \mapsto \left(
(x,h) \mapsto \langle f, \pi(x,h) \psi \rangle \right)
\] is a scalar multiple of an isometry, which gives rise
to the {\bf wavelet inversion formula}
\begin{equation} \label{eqn:wvlt_inv}
f = \frac{1}{c_\psi} \int_G \mathcal{W}_\psi f(x,h) \pi(x,h) \psi ~
d\mu_G(x,h)~.
\end{equation}

A thorough understanding of the properties of the wavelet transform hinges on the {\em dual action}, i.e., 
 the (right) linear action $\mathbb{R}^d \times H \ni(\xi,h) \mapsto
 h^T \xi$: By the results of \cite{Fu96,Fu10}, $H$ is admissible iff the dual action has a single open orbit
 $\mathcal{O} = \{ h^T \xi_0 : h \in H \} \subset \mathbb{R}^d$ of full measure (for some $\xi_0 \in \mathcal{O}$), such
 that in addition the stabilizer group $H_{\xi_0} = \{ h \in H : h^T \xi_0 = \xi_0 \}$ is
 compact. (This condition does of course not depend on $\xi_0 \in
 \mathcal{O}$.) The dual orbit will be of central importance to this paper.
 
 Let us next describe the pertinent notions from coorbit theory. 
 A weight on a locally compact group $K$ is a continuous function $w: K \to \mathbb{R}^+$ satisfying $w(xy) \le w(x) w(y)$, for all $x,y \in K$. 
The Besov-type coorbit spaces that we focus on in this paper are obtained by fixing a weight $v$ of the type 
\begin{equation} \label{eqn:defn_v} v(x,h) =  (1+|x| + \|h\|_\infty)^s w(h) \end{equation} on $G$, where
$|\cdot|: \mathbb{R}^d \to \mathbb{R}$ is an arbitrary fixed norm, and $w$ is some weight on $H$. (Note that this indeed defines a weight $v$.) We then define, for $1 \le p,q < \infty$,
\[
L^{p,q}_v (G) = \left\{ F: G \to \mathbb{C}~:~\int_H \left(
\int_{\mathbb{R}^d} |F(x,h)|^p v(x,h)^p dx \right)^{q/p} \frac{dh}{|{\rm det}(h)|} < \infty
\right\}~,
\]
with the obvious norm, and the usual conventions regarding
identification of a.e. equal functions. 

We write ${\rm L}^p_v(G) = {\rm L}^{p,p}_v(G)$. The corresponding spaces for $p=\infty$ and/or
$q = \infty$ are defined by replacing integrals with essential
suprema. We will also use
\[
{\rm L}^p_s(\mathbb{R}^d) = \left\{ f \mbox{
Borel-measurable}~:~\int_{\mathbb{R}^d} |f(x)|^p (1+|x|)^{sp} dx <
\infty \right\}~.
\]
 
 We next recall the necessary ingredients of coorbit theory. Our main sources for the following are the papers \cite{FeiGr0,FeiGr1,FeiGr2,Gr}. We assume that $Y$
is a Banach space of functions on $G$ that fulfills the conditions of \cite[2.2]{Gr}, i.e. it is continuously embedded in ${\rm L}^1_{loc}(G)$, and
fulfills certain compatibility conditions with convolution. Examples of such spaces are the 
 ${\rm L}^{p,q}_v(G)$ defined above. The following definition will be important:
\begin{definition}
A weight $v_0$ is called {\bf control weight} for $Y$ if it satisfies 
\[
 v_0(x,h) = \Delta_G(x,h)^{-1} v_0((x,h)^{-1})~, 
\]
as well as 
 \[
\max \left( \|L_{(x,h)^{\pm 1}} \|_{Y \to Y},\| R_{(x,h)} \|_{Y \to Y},\|
R_{(x,h)^{-1}} \|_{Y \to Y} \Delta_G(x,h)^{-1} \right) \le v_0(x,h)
\] where $L_{(x,h)},R_{(x,h)}$ are left and right translation by $(x,h) \in G$.
\end{definition}

Using a control weight $v_0$ for $Y$ we define the set
\[ \mathcal{A}_{v_0} = \{ \psi \in {\rm L}^2(\mathbb{R}^d)~:~\mathcal{W}_{\psi}\psi \in {\rm L}^1_{v_0}(G) \} \] of analyzing vectors. It turns out that $\mathcal{A}_{v_0}$ is a vector space, in fact a Banach space, and invariant under $\pi$. We denote its conjugate dual as $\mathcal{A}_{v_0}^\sim$. 
The sesquilinear map ${\rm L}^2(\mathbb{R}^d) \times \mathcal{A}_{v_0}
\ni (f,g) \mapsto \langle f,g \rangle$ can be uniquely extended to
$\mathcal{A}_{v_0}^{\sim} \times \mathcal{A}_{v_0}$. Hence, if we
fix $\psi \in \mathcal{A}_{v_0}$, the definition of 
the continuous wavelet transform of $f \in \mathcal{A}_{v_0}^{\sim}$
via
\[
 \mathcal{W}_\psi f(x,h) = \langle f, \pi(x,h) \psi \rangle
\] again makes sense. 

Now the coorbit space associated to
$Y$ is defined by fixing a nonzero  $\psi \in \mathcal{A}_{v_0}$ and letting 
\[
{\rm Co}(Y) = \{ f \in \mathcal{A}_{v_0}^{\sim} : \mathcal{W}_\psi f
\in Y \}
\] with the norm $\| f \|_{{\rm Co}(Y)} = \| \mathcal{W}_\psi f
\|_Y$. It now follows by \cite[Theorem 5.2]{FeiGr1} that the space $Co Y$ is a Banach space, and independent of the choice of the analyzing vector, as well as of the precise choice of control weight.  

Clearly, the whole construction hinges on the actual existence of a nonzero analyzing vector. For arbitrary control weights $v_0$, this might be difficult to answer. For the space $Y = {\rm L}^{p,q}_v(G)$, with $v$ as in (\ref{eqn:defn_v}),  we first note that by \cite[Lemma 2.3]{Fu_coorbit}, 
there exists a control weight $v_0$ for $Y = {\rm L}^{p,q}_v(G)$ satisfying the estimate
 \begin{equation} \label{eqn:cont_weight_sep}
  v_0(x,h) \le (1+|x|)^s w_0(h)~,
 \end{equation} with $w_0 : H \to \mathbb{R}^+$ defined by 
 \begin{eqnarray*} \nonumber
  w_0(h) & = &    (w(h)+w(h^{-1})) \max \left(\Delta_G(0,h)^{-1/q}, \Delta_G(0,h)^{1/q-1} \right)  \\  & & \times \left(|{\rm det}(h)|^{1/q-1/p} + |{\rm det}(h)|^{1/p-1/q} \right) (1+\|h\|_\infty+\|h^{-1}\|_\infty)^s~.
 \end{eqnarray*} 
 But then Theorem 2.1 of \cite{Fu_coorbit} implies that $\mathcal{A}_{v_0}$ is nontrivial, and thus $Co Y$ is indeed well-defined.

 We next turn to the study of atomic decompositions. As already explained in the introduction, the sampling theorems derived in \cite{FeiGr0,FeiGr1,FeiGr2,Gr} are rather flexible in terms of the sampling sets, at the price of imposing additional conditions on the analyzing vector. These are formulated in the next definition. 
 
 \begin{definition}
  Let $Y$ denote any solid Banach function space on the locally compact group $G$, $U \subset G$ a compact neighborhood of the identity, and $F: G \to \mathbb{C}$. We let
  \[
   \left(\mathcal{M}_U^R F \right) (x) = \sup_{y \in U}|f(xy)|
  \] denote the local maximum function of $F$ with respect to $U$. Given a weight $v_0$ on $G$, we denote the associated Wiener amalgam space by
  \[
   W^R(C^0,{\rm L}^1_{v_0}) = \{ F : G \to \mathbb{C}~:~ F \mbox{ continuous }, \mathcal{M}_U^R F \in {\rm L}^1_{v_0}(G) \}~,
\]
with norm $\| F \|_{ W^R(C^0,{\rm L}^1_{v_0})} = \|  \mathcal{M}_U^R F \|_{{\rm L}^1_{v_0}}$.

 We let
\[
 \mathcal{B}_{v_0} = \{ \psi \in {\rm L}^2(\mathbb{R}^d)~:~ \mathcal{W}_\psi \psi \in W^R(C^0,{\rm L}^1_{v_0})  \}~. 
\]
 \end{definition}
  
Since ${\rm L}^1_{v_0}(G) \supset  W^R(C^0,{\rm L}^1_{v_0}) $, every nonzero element in $\mathcal{B}_{v_0}$ can be used to characterize elements of $Co Y$, whenever $v_0$ is a control weight for $Y$. The additional condition will allow to derive the desired discrete characterizations as well. In other words, the elements of $\mathcal{B}_{v_0}$, for a control weight $v_0$ associated to a Banach function space $Y$, are precisely the ``nice'' wavelets associated to the coorbit space $Co(Y)$ that were mentioned in the introduction. 

\begin{definition}
 Let $U \subset G$ denote a neighborhood of the identity, and $Z=(z_i)_{i \in I} \subset G$.
\begin{enumerate}
 \item[(a)] The family $(z_i)_{i \in I}$ is called {\bf $U$-dense}, if $\bigcup_{i \in I} z_i U = G$.
 \item[(b)] The family $(z_i)_{i \in I}$ is called {\bf $U$-separated}, if $z_i U \cap z_j U = \emptyset$, whenever $i \not=j$. It is called {\bf separated}, if there exists a neighborhood $U$ of unity such that it is $U$-separated. It is called {\bf relatively separated} if it is the finite union of separated families.
\end{enumerate}
\end{definition}

\begin{remark} \label{rem:ex_rel_sep}
Note that $U$-dense, relatively separated families always exist, for every neighborhood $U$ of the identity. 
More precisely, given any separated family $Z_0$, there exists a $U$-dense separated set $Z$ containing $Z_0$.
To see this, pick a symmetric neighborhood $V$ of the identity such that $Z_0$ is $V$-separated, and in addition fulfills $V^2 = \{ vw : v,w \in V \} \subset U$, and apply Zorn's Lemma to find a $V$-separated family $Z=(z_i)_{i \in I}$ containing $Z_0$ that is maximal with respect to inclusion. Then maximality and $V^2 \subset U$ imply that this family is $U$-dense. 
\end{remark}

For the formulation of the atomic decomposition result, we need a norm on the sequences space. We use $\mathbf{1}_W: G \to \mathbb{R}$ to denote the indicator function of a subset $W \subset G$. 
\begin{definition}
 Let $Y$ be a solid Banach function space on $G$, and $Z \subset G$ a relatively separated set. Picking an arbitrary compact neighborhood $W$ of the identity, we define
 \[
  \| (c_z)_{z \in Z} \|_{Y_d} = \left\| \sum_{z \in Z} |c_z| \mathbf{1}_{z W} \right\|_{Y}~,
 \]
and let $Y_d = \{ (c_z)_{z \in Z} \in \mathbb{C}^{Z}: \| (c_z)_{z \in Z} \|_{Y_d}< \infty \}$.
\end{definition}

\begin{remark} \label{rem:coeff_space}
Note that $Y_d$ is a nontrivial Banach space, and the norm of $Y_d$ is (up to equivalence) independent of the choice of $W$ \cite{FeiGr0}.
For the semidirect product group $G = \mathbb{R}^d \rtimes H$, the typical sampling sets are of the type
\[
 Z = \{ (h_jx_k, h_j): j \in J , k\in K  \}
\] where $\{ h_j : j \in J \} \subset H$ and $\{ x_k : k \in K \} \subset \mathbb{R}^d$ are uniformly dense and separated. 
One easily verifies that $Z$ is uniformly dense and separated in $G$. Indeed, if $V \subset \mathbb{R}^d$ and $W \subset H$ separate $(x_k)_{k \in K}$ and $(h_j)_{j \in J}$, respectively, then $U = \{(x,g): x \in V, g \in W \}$ separates $Z$: $(h_j x_k, h_j) U \cap (h_{j'} x_{k'}, h_{j'}) U \not= \emptyset$ entails $h_j W \cap h_{j'} W \not= \emptyset$, and thus by assumption on $W$, $j=j'$. But then a comparison of the translation coordinates yields $h_j x_k + h_j V \cap h_j x_{k'} + h_j V \not= \emptyset$, i.e., $x_k + V \cap x_{k'}+ V \not= \emptyset$, and thus $k=k'$. A similar calculation shows that $Z$ is $U$-dense if $\{ x_k : k \in K \}$ is $V$-dense and $\{ h_j: j \in J \}$ is $W$-dense.   

If $Y = {\rm L}^{p,q}_v(G)$, for some weight $v$, the associated coefficient space norm is equivalent to a discrete weighted $\ell^{p,q}$-norm, i.e.
\begin{equation} \label{eqn:discrete_norm_equiv}
 \left\| (c_{j,k}) \right\|_{Y_d} \asymp \left( \sum_{j \in J} |{\rm det}(h_j)|^{q/p-1}\left( \sum_{k \in K} \left( |c_{j,k}| v(h_j x_k, h_j) |{\rm det}(h_j)|^{1/p-1/q} \right)^p \right)^{q/p} \right)^{1/q}
\end{equation} with the usual modifications for $p= \infty$ and/or $q=\infty$. To see this, note that we can pick a compact neighborhood $W= W_1 \times W_2 \subset \mathbb{R}^d \times H$ of the neutral element such that $(h_j x_k, h_j) W \cap (h_{j'} x_{k'}, x_{k'}) W = \emptyset$.
By submultiplicativity and continuity of the weight $w$, there are constants $c_1,c_2$ such that
\[
\forall j\in J \forall k \in K~:~  c_1 v(h_j x_k, h_j) \le v|_{(h_j x_k, h_j) W} \le c_2 v(h_j x_k, h_j)~.
\] Hence if we employ this particular set $W$, left-invariance of Haar measure on $G$ ensures  (\ref{eqn:discrete_norm_equiv}).  In particular, the finitely supported sequences are dense in $Y_d$. 

As a special case, we obtain that the discrete coefficient space associated to $Y = {\rm L}^p(G)$ is indeed $\ell^p(Z)$. 
\end{remark}

We can now formulate the atomic decomposition result \cite{Gr}, which will be seen to imply Theorem \ref{thm:ex_Besov}. Note in particular that, since ${\rm L}^2(\mathbb{R}^d) = Co {\rm L}^2(G)$  by \cite[Corollary 4.4]{FeiGr1}, this statement will also yield criteria for wavelet frame generators of ${\rm L}^2(\mathbb{R}^d)$.  
 \begin{theorem} \label{thm:at_dec}
 Let $v_0$ denote a control weight for $Y$, and 
let $0 \not= \psi \in \mathcal{B}_{v_0}$. Assume that the finitely supported seuences are dense in $Y_d$. Then
there exists a neighborhood $U \subset G$ of unity such that for all
$U$-dense, relatively separated families $(z_i)_{i \in I} \subset
G$, the following statements are true:
\begin{enumerate}
 \item[(a)] There is a linear bounded map $C:  Co Y \to Y_d(Z)$ with the property that, for all $f \in Co Y$,
 \[
  f = \sum_{i \in I} C(f) (z_i) \pi(z_i) \psi~,
 \] with unconditional convergence in $\| \cdot \|_{Co Y}$.
\item[(b)] Conversely, for every sequence $(c(z_i))_{i \in I} \in Y_d(Z)$, the sum
\[
 g = \sum_{i \in I} c(z_i) \pi(z_i) \psi
\] converges unconditionally in $\| \cdot \|_{Co Y}$, with $\| g \|_{Co Y} \preceq \| (c(z_i))_{i \in I} \|_{Y_d(Z)}$.
 \item[(c)] The norms $\| f \|_{Co Y}$ and $\| (\mathcal{W}_\psi f (z_i))_{i \in I}\|_{Y_d(Z)}$ are equivalent. Moreover, 
$ f \in {Co Y}$ iff $ (\mathcal{W}_\psi f (z_i))_{i \in I} \in Y_d(Z)$.
\end{enumerate}
\end{theorem}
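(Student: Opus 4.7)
The plan is to deduce Theorem \ref{thm:at_dec} directly from the abstract atomic decomposition machinery of Feichtinger and Gröchenig (as presented in \cite{Gr}), by verifying that all of its hypotheses are met in the current setting. The standing assumptions on $H$ ensure that the quasi-regular representation $\pi$ is irreducible and square-integrable on $L^2(\mathbb{R}^d)$, and the definitions of $Y$, $v_0$ and $\mathcal{A}_{v_0}$ given above reproduce, verbatim, the objects used in the abstract theory. Hence the theorem is fundamentally a translation exercise, and the only nontrivial point is that the hypothesis $\psi \in \mathcal{B}_{v_0}$ supplies the oscillation estimate that drives the discretization.

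First I would record that $\mathcal{W}_\psi\psi \in W^R(C^0,L^1_{v_0})$ implies the key oscillation estimate of the abstract theory: for every $\varepsilon>0$ one can find a compact neighborhood $U$ of the identity in $G$ such that
\[
\bigl\| \mathrm{osc}_U(\mathcal{W}_\psi\psi) \bigr\|_{L^1_{v_0}} < \varepsilon,
\]
where $\mathrm{osc}_U F(x) = \sup_{u\in U}|F(xu)-F(x)|$. This follows from continuity of $\mathcal{W}_\psi\psi$ combined with dominated convergence applied to the majorant $\mathcal{M}_U^R(\mathcal{W}_\psi\psi) \in L^1_{v_0}(G)$, as $U$ shrinks to $\{e\}$. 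Having this estimate at our disposal is precisely the input required by \cite[Theorem~S]{Gr} (and its refinements in \cite{FeiGr1,FeiGr2}).

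Next, I would choose $U$ small enough so that the oscillation above forces contractivity of a certain reconstruction operator on $Y$. Concretely, for any $U$-dense relatively separated family $Z=(z_i)_{i\in I}$, the kernel $\mathcal{W}_\psi\psi$ can be approximated by a sum of translates $\sum_i \mathbf{1}_{Q_i}\, L_{z_i}(\mathcal{W}_\psi\psi)$ over a Voronoi-type partition $(Q_i)$ subordinate to $Z$; the oscillation bound and the convolution compatibility of $Y$ (plus solidity) turn the error into an operator on $Y$ of norm $<1$. Inverting this perturbation produces the coefficient map $C: Co\,Y \to Y_d$ of part (a), together with the expansion $f=\sum_i C(f)(z_i)\,\pi(z_i)\psi$. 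Unconditional convergence in $\|\cdot\|_{Co\,Y}$ follows from density of finitely supported sequences in $Y_d$, which is the reason this density hypothesis is included.

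Part (b) is dual: for $(c(z_i)) \in Y_d$, the partial sums $\sum_i c(z_i)\pi(z_i)\psi$ are Cauchy in $Co\,Y$ because applying $\mathcal{W}_\psi$ term-by-term yields $\sum_i c(z_i)\,L_{z_i}(\mathcal{W}_\psi\psi)$, whose $Y$-norm is controlled by $\|(c(z_i))\|_{Y_d}$ via the standard amalgam/convolution inequality $\|\sum_i|c_i|\,L_{z_i}F\|_Y \preceq \|F\|_{W^R(C^0,L^1_{v_0})}\,\|(c_i)\|_{Y_d}$ valid for relatively separated $Z$ and $v_0$-moderate $Y$. Part (c) then follows by combining (a) and (b): the reproducing formula $\mathcal{W}_\psi f(x)=\mathcal{W}_\psi f * (\mathcal{W}_\psi\psi/c_\psi)(x)$ lets one estimate $\|\mathcal{W}_\psi f\|_Y$ in terms of its samples $(\mathcal{W}_\psi f(z_i))$ using the same amalgam estimate, giving one inequality, while the other follows from boundedness of $C$ applied to $f$ expanded as $f=\sum_i \mathcal{W}_\psi f(z_i)\pi(z_i)\tilde\psi$ for a suitable dual atom $\tilde\psi\in\mathcal{B}_{v_0}$ (obtained by a Neumann-series argument). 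The main obstacle in making this rigorous is not conceptual but bookkeeping: one must match the abstract convolution and amalgam inequalities of \cite{FeiGr1,Gr} to the concrete weighted mixed-norm spaces $L^{p,q}_v(G)$, which is handled by the estimate (\ref{eqn:cont_weight_sep}) for the control weight and the explicit norm equivalence (\ref{eqn:discrete_norm_equiv}).
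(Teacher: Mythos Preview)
Your approach is correct and matches the paper's treatment: the paper does not give its own proof of this theorem but simply cites it as the atomic decomposition result from \cite{Gr} (building on \cite{FeiGr0,FeiGr1,FeiGr2}), relying on exactly the verification you outline---namely that $\psi\in\mathcal{B}_{v_0}$ supplies the Wiener-amalgam/oscillation condition needed to invoke the abstract Feichtinger--Gr\"ochenig discretization. Your sketch of the mechanism (oscillation smallness, Neumann-series inversion, amalgam--convolution estimates) is an accurate summary of how that machinery works.
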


We next exhibit a class of nice wavelets: Any bandlimited Schwartz function with Fourier support contained in the open dual orbit is in $\mathcal{B}_{v_0}$. This was shown in \cite{Fu_coorbit}, using oscillation estimates. The connection between oscillation and Wiener amalgam space is explained in the next remark:
\begin{remark}
 Let $U \subset G$ denote a relatively compact neighborhood of the identity, and $F: G \to \mathbb{C}$ any function. We let
\[
 {\rm osc}_U(F)(x) = \sup \{ |F(x)-F(xy)|: y \in U \}~.
\] It is easy to see that $F \in  W^R(C^0,{\rm L}^1_{v_0})$ holds iff both $F$ and 
$ {\rm osc}_U(F)$ are in ${\rm L}^1_{v_0}(G)$. 
\end{remark}

The following is \cite[Lemma 2.6]{Fu_coorbit}.
\begin{theorem} \label{thm:bl_atoms} For any weight $v_0$ satisfying $v_0(x,h) \le (1+|x|+\| h \|_\infty)^s w_0(h)$, with $w_0 : H \to \mathbb{R}^+$ an arbitrary weight, we have 
 $\mathcal{F}^{-1}(C_c^\infty(\mathcal{O})) \subset \mathcal{B}_{v_0}$. 
\end{theorem}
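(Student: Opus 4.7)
The approach is to exploit the Fourier-domain representation of $\mathcal{W}_\psi \psi$. Write $\varphi := \widehat{\psi} \in C_c^\infty(\mathcal{O})$, set $K := \mathrm{supp}(\varphi)$, and observe that Plancherel yields
\[
\mathcal{W}_\psi\psi(x,h) = |\det h|^{1/2}\int_{\mathbb{R}^d} \varphi(\xi)\,\overline{\varphi(h^T\xi)}\,e^{2\pi i\langle x,\xi\rangle}\,d\xi.
\]
The integrand vanishes unless both $\xi \in K$ and $h^T\xi \in K$, so the $h$-support of $\mathcal{W}_\psi\psi$ is contained in $K_H := \{h \in H : h^T K \cap K \neq \emptyset\}$.

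The step I expect to be the main conceptual obstacle is to prove that $K_H$ is relatively compact in $H$. Here the admissibility of $\pi$ enters crucially: the orbit map $h \mapsto h^T\xi_0$ descends to a homeomorphism $H/H_{\xi_0} \simeq \mathcal{O}$, and because the stabilizer $H_{\xi_0}$ is compact, the quotient map $H \to H/H_{\xi_0}$ is proper. Hence the preimage $L := \{h \in H : h^T\xi_0 \in K\}$ of the compact set $K \subset \mathcal{O}$ is compact in $H$. Given $h \in K_H$, pick $\xi \in K$ with $h^T\xi \in K$ and write $\xi = g_1^T\xi_0$, $h^T\xi = g_2^T\xi_0$ with $g_1, g_2 \in L$; a direct computation shows $g_1 h g_2^{-1} \in H_{\xi_0}$, so $h \in L^{-1}\cdot H_{\xi_0}\cdot L$, a compact subset of $H$.

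With $K_H$ relatively compact, the rest is largely routine. For $h \in K_H$ the $\xi$-integrand above is smooth, compactly supported, and its $C^k$-norms are bounded uniformly in $h$ (differentiations in $\xi$ produce powers of $h^T$, which stay bounded on $K_H$); integration by parts in $\xi$ therefore yields, for every $N$,
\[
|\mathcal{W}_\psi\psi(x,h)| \le C_N (1+|x|)^{-N}, \qquad h \in K_H,
\]
with $C_N$ independent of $h$. Since $\|h\|_\infty$, $w_0(h)$ and $|\det h|^{-1}$ are all bounded on $K_H$, integrating against $v_0(x,h)|\det h|^{-1} dx\, dh$ and choosing $N > s + d$ immediately gives $\mathcal{W}_\psi\psi \in L^1_{v_0}(G)$.

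For the oscillation bound, the remark preceding the theorem reduces matters to showing $\mathrm{osc}_U(\mathcal{W}_\psi\psi) \in L^1_{v_0}(G)$ for some compact neighborhood $U$ of the identity. Using the identity $\pi((x,h)(y,g)) = \pi(x,h)\pi(y,g)$, the difference $\mathcal{W}_\psi\psi(x,h) - \mathcal{W}_\psi\psi((x,h)(y,g))$ equals $\mathcal{W}_{\psi - \pi(y,g)\psi}\psi(x,h)$, so the Fourier-side analysis applies verbatim with the window $\psi$ replaced by $\psi - \pi(y,g)\psi$. For $U$ sufficiently small the Fourier transforms of all such windows are supported in a fixed compact enlargement $K' \subset \mathcal{O}$ of $K$, the corresponding support set $K_H'$ is again relatively compact by the previous argument applied to $K'$, and the integration-by-parts bounds hold uniformly in $(y,g) \in U$. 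Taking the supremum over $(y,g) \in U$ and integrating as before yields $\mathrm{osc}_U(\mathcal{W}_\psi\psi) \in L^1_{v_0}(G)$, whence $\psi \in \mathcal{B}_{v_0}$.
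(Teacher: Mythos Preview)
Your argument is correct. The paper does not actually give a proof of this theorem; it simply cites \cite[Lemma 2.6]{Fu_coorbit}, noting in the preceding paragraph that the result ``was shown in \cite{Fu_coorbit}, using oscillation estimates.'' Your approach is precisely along those lines: you verify $\mathcal{W}_\psi\psi \in {\rm L}^1_{v_0}(G)$ and ${\rm osc}_U(\mathcal{W}_\psi\psi) \in {\rm L}^1_{v_0}(G)$ separately, using the Fourier-side representation and the compact-stabilizer hypothesis to confine the $h$-support to a compact set.

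Two minor remarks. First, in the oscillation step the relevant $h$-support is not $\{h:h^TK'\cap K'\neq\emptyset\}$ but rather $\{h:h^TK\cap K'\neq\emptyset\}$, since the first factor $\widehat{\psi}$ is still supported in $K$; your compactness argument adapts immediately (use $L=p^{-1}(K)$ and $L'=p^{-1}(K')$ to get $h\in L^{-1}H_{\xi_0}L'$). Second, the fact that the orbit map $H/H_{\xi_0}\to\mathcal{O}$ is a homeomorphism deserves a word of justification: it follows from the standard fact that for a second countable locally compact group acting continuously on a locally compact Hausdorff space, the bijection from $H/H_{\xi_0}$ onto an open (hence locally compact) orbit is automatically a homeomorphism. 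With these small clarifications, your proof is complete and matches the strategy the paper attributes to \cite{Fu_coorbit}.
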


\section{Formulation and proof of the main result} \label{sect:main}

The results of the previous section have set the stage for the main result. While bandlimited Schwartz functions are fairly convenient to work with for many purposes, the usefulness of compactly supported wavelets has been emphasized repeatedly. The chief aim of this section is to replace the assumption of proper confinement of the supports on the Fourier transform by a more quantitative version in terms of decay properties of the Fourier transform $\widehat{\psi}(\xi)$, as $\xi$ approaches the boundary of the dual orbit. These decay conditions will be formulated by vanishing moment conditions, defined as follows:
\begin{definition} \label{defn:van_mom}
 Let $r \in \mathbb{N}$ be given.
 $f \in {\rm L}^1(\mathbb{R}^d)$ {\bf has vanishing moments in $\mathcal{O}^c$ of order $r$}  if all
 distributional derivatives $\partial^\alpha \widehat{f}$ with $|\alpha|\le r$ are
 continuous functions, and all derivatives of degree $|\alpha|<r$ are identically vanishing on $\mathcal{O}^c$.
\end{definition}
Note that under suitable integrability conditions on $\psi$, the vanishing moment conditions are equivalent to 
\[
 \forall |j| < k,\forall \xi \in \mathcal{O}^c ~:~\int_{\mathbb{R}^d} x^j \psi(x) e^{-2 \pi i \langle \xi, x \rangle} dx = 0~. 
\]

We next define an auxiliary function $A: \mathcal{O} \to \mathbb{R}^+$ as follows:
Given any point $\xi \in \mathcal{O}$, let ${\rm
dist}(\xi,\mathcal{O}^c)$ denote the minimal distance of $\xi$ to
$\mathcal{O}^c$, and define
\[
 A(\xi) = \min \left( \frac{{\rm dist}(\xi,\mathcal{O}^c)}{1+\sqrt{|\xi|^2-{\rm dist}(\xi,\mathcal{O}^c)^2}},
 \frac{1}{1+|\xi|} \right)~. 
\]
By definition, $A$ is a continuous function with $A(\cdot) \le 1$.

Using $A$, we then define $\Phi_\ell : H \to \mathbb{R}^+ \cup \{ \infty \}$, for $\ell \in \mathbb{N}$ as
\begin{equation} \label{eqn:def_Phi_ell}
 \Phi_\ell(h) =  \int_{\mathbb{R}^d} A(\xi)^\ell A(h^T \xi)^\ell d\xi
\end{equation}
%
%

We note a few simple but useful properties of $\Phi_\ell$: 
\begin{lemma}
 \label{lem:Phi_ell_elementary}
 \begin{enumerate}
  \item[(a)] $\ell \le \ell'$ implies $\Phi_\ell(h) \ge \Phi_{\ell'}(h)$, for all $h \in H$.
  \item[(b)] $\Phi_\ell(h) = |{\rm det}(h)|^{-1} \Phi_\ell(h^{-1})$. 
 \end{enumerate}
\end{lemma}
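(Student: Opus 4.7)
Both claims are elementary; I do not anticipate any substantive obstacles, and the plan is to do nothing more than exploit the explicit definition of $\Phi_\ell$ together with the pointwise bound $A(\cdot)\le 1$ that the authors have already recorded.

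For part (a), the plan is to reduce the statement to a pointwise inequality inside the integral. Since $A(\xi)\in[0,1]$ and $A(h^T\xi)\in[0,1]$ for every $\xi\in\mathbb{R}^d$ and $h\in H$, the map $\ell\mapsto A(\xi)^\ell A(h^T\xi)^\ell$ is nonincreasing in $\ell$; hence $\ell\le \ell'$ forces the integrand in the definition of $\Phi_{\ell'}(h)$ to be dominated pointwise by the integrand in the definition of $\Phi_\ell(h)$, and integrating over $\mathbb{R}^d$ gives $\Phi_\ell(h)\ge \Phi_{\ell'}(h)$. No integrability issue arises because we only compare the two integrals, and the inequality is true regardless of whether either side is infinite.

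For part (b), I would apply the linear change of variables $\eta=h^T\xi$ in (\ref{eqn:def_Phi_ell}). Since $\det(h^T)=\det(h)$, the Jacobian gives $d\xi=|\det(h)|^{-1}\,d\eta$, and the inverse substitution is $\xi=(h^{-1})^T\eta$. Substituting yields
\[
\Phi_\ell(h)=|\det(h)|^{-1}\int_{\mathbb{R}^d} A((h^{-1})^T\eta)^\ell\, A(\eta)^\ell\, d\eta.
\]
Commuting the two nonnegative factors in the integrand and comparing with the definition of $\Phi_\ell$ evaluated at $h^{-1}$ (i.e.\ replacing $h$ by $h^{-1}$ in (\ref{eqn:def_Phi_ell})), the remaining integral is exactly $\Phi_\ell(h^{-1})$, which proves the identity.

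The only points that require any care are bookkeeping: getting the sign of the determinant power right (it must be $-1$, not $+1$, because $h^T$ pushes measure forward by a factor $|\det h|$), and recognising the reordered integrand as $\Phi_\ell(h^{-1})$. The real analytical content of the paper does not sit in this lemma but in the later estimates of $\Phi_\ell$ that underlie the vanishing-moment criterion; the present statement is a utility identity that I would expect to dispatch in a few lines.
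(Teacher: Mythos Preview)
Your proof is correct and follows essentially the same route as the paper: part (a) via the pointwise bound $A(\cdot)\le 1$, and part (b) via the substitution $\eta=h^T\xi$ with Jacobian $|\det h|^{-1}$, recognising the resulting integral as $\Phi_\ell(h^{-1})$.
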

\begin{proof}
 Part (a) follows from $A(\cdot) \le 1$, and part (b) by substitution: Letting $\omega = h^T \xi$, and using the notation $h^{-T} = (h^{-1})^T$, we find
 \begin{eqnarray*}
 \Phi_\ell(h) & = & \int_{\mathbb{R}^d} A(\xi)^\ell A(h^T \xi)^\ell d\xi \\
 & = & |{\rm det}(h)|^{-1} \int_{\mathbb{R}^d} A(h^{-T} \omega)^\ell A(\omega)^\ell d\omega = |{\rm det}(h)|^{-1} \Phi_\ell(h^{-1})~. 
 \end{eqnarray*}
\end{proof}

Now the following definition will allow to formulate sufficient vanishing moment criteria for elements of $\mathcal{B}_{w_0}$.
\begin{definition} \label{defn:str_temperately_embed}
 Let $w_0: H \to \mathbb{R}^+$ denote a weight, $s \ge 0$. We call $\mathcal{O}$ {\bf strongly $(s,w_0)$-temperately embedded (with index $\ell \in \mathbb{N}$)} if $\Phi_\ell \in W(C^0,{\rm L^1}_{m})$, where the weight $m: H \to \mathbb{R}^+$
is defined by
\[
m(h) = w_0(h) |{\rm det}(h)|^{-1/2} (1+\| h \|_\infty)^{2(s+d+1)} ~.
\]
\end{definition}

This definition plays a similar role as the notion of {\em temperately embedded orbits} introduced in \cite{Fu_coorbit}. Essentially, the index $\ell$ will determine the number of vanishing moments needed to ensure that a given function is in $\mathcal{B}_{w_0}$. Note that somewhat contrary to the intuition conveyed by the terminology, it is currently not clear whether strong temperate embeddedness implies temperate embeddedness. 
\begin{theorem} \label{thm:main}
Let the weight $v_0$ on $G$ fulfill the estimate $v_0(x,h) \le (1+|x|)^s w_0(h)$, and 
assume that $\mathcal{O}$ is strongly $(s,w_0)$-embedded with index $\ell$. 
Then any function $\psi \in
{\rm L}^1(\mathbb{R}^d) \cap C^{\ell+d+1}(\mathbb{R}^d)$ with
vanishing moments in $\mathcal{O}^c$ of order $t>\ell+s+d+1$ and $|\widehat{\psi}|_{t,t}<\infty$
is contained in $\mathcal{B}_{w_0}$. 

There exist compactly supported functions $\psi$ satisfying this condition. 
\end{theorem}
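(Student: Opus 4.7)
By the remark preceding Theorem \ref{thm:bl_atoms}, the goal $\mathcal{W}_\psi\psi\in W^R(C^0,{\rm L}^1_{v_0})$ reduces to showing that both $\mathcal{W}_\psi\psi$ and its local oscillation ${\rm osc}_U(\mathcal{W}_\psi\psi)$ lie in ${\rm L}^1_{v_0}(G)$ for a sufficiently small compact neighborhood $U$ of the identity. The starting point is the Plancherel identity
\[
\mathcal{W}_\psi\psi(x,h) \;=\; |{\rm det}(h)|^{1/2}\int_{\mathbb{R}^d}\widehat\psi(\xi)\,\overline{\widehat\psi(h^T\xi)}\,e^{2\pi i\langle x,\xi\rangle}\,d\xi,
\]
which follows from $\widehat{\pi(x,h)\psi}(\xi)=|{\rm det}(h)|^{1/2}e^{-2\pi i\langle x,\xi\rangle}\widehat\psi(h^T\xi)$. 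The overall plan is to trade vanishing moments in $\mathcal{O}^c$ for pointwise decay of $\widehat\psi$ governed by the function $A$, and then to trade the regularity of $\psi$ for decay of $\mathcal{W}_\psi\psi$ in $x$, so that the integrability becomes exactly the condition packaged in Definition \ref{defn:str_temperately_embed}.

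The vanishing moment hypothesis says $\partial^\beta\widehat\psi\equiv 0$ on $\mathcal{O}^c$ for $|\beta|<t$, and together with $|\widehat\psi|_{t,t}<\infty$ a Taylor expansion from the nearest point of $\mathcal{O}^c$ gives
\[
|\partial^\beta\widehat\psi(\xi)|\;\preceq\;A(\xi)^{t-|\beta|}\qquad(\xi\in\mathcal{O},\;|\beta|\le t),
\]
since for $\xi$ close to $\mathcal{O}^c$ the remainder has order ${\rm dist}(\xi,\mathcal{O}^c)^{t-|\beta|}(1+|\xi|)^{-t}$, while for $\xi$ bounded away from $\mathcal{O}^c$ the Schwartz bound $(1+|\xi|)^{-t}$ alone suffices; both are dominated by $A(\xi)^{t-|\beta|}$. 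The chain rule extends this to $|\partial^\beta\{\widehat\psi\circ h^T\}(\xi)|\preceq(1+\|h\|_\infty)^{|\beta|}A(h^T\xi)^{t-|\beta|}$.

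Setting $N=s+d+1$, I would integrate by parts $N$ times against $e^{2\pi i\langle x,\xi\rangle}$, expand using Leibniz, and insert the above pointwise estimates; the hypothesis $t>\ell+N$ ensures $t-N\ge\ell$ and hence
\[
(1+|x|)^N|\mathcal{W}_\psi\psi(x,h)|\;\preceq\;|{\rm det}(h)|^{1/2}(1+\|h\|_\infty)^N\,\Phi_{t-N}(h)\;\le\;|{\rm det}(h)|^{1/2}(1+\|h\|_\infty)^N\,\Phi_\ell(h),
\]
using Lemma \ref{lem:Phi_ell_elementary}(a). Integrating against $v_0(x,h)/|{\rm det}(h)|\le (1+|x|)^s w_0(h)|{\rm det}(h)|^{-1}$, the $x$-factor produces $\int(1+|x|)^{-(d+1)}dx<\infty$, and the remaining $H$-integral is controlled by $\Phi_\ell\in W(C^0,{\rm L}^1_m)\subset{\rm L}^1_m$. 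The oscillation is treated by the same Fourier technique: for $u=(y,k)\in U$ near the identity, the difference $\mathcal{W}_\psi\psi(g)-\mathcal{W}_\psi\psi(gu)$ has the analogous integral representation with $\widehat\psi(h^T\xi)$ replaced by $|{\rm det}(k)|^{1/2}e^{-2\pi i\langle hy,\xi\rangle}\widehat\psi((hk)^T\xi)$; a mean-value estimate in $u$ followed by the same integration by parts costs one additional power of $(1+\|h\|_\infty)^N$, since each derivative in $y$ or $k$ produces either a factor of $\xi$ (absorbed by one more integration by parts in $x$) or a factor linear in $h$. The resulting bound has $(1+\|h\|_\infty)^{2N}$ in place of $(1+\|h\|_\infty)^N$, precisely matching the weight $m$ from Definition \ref{defn:str_temperately_embed}.

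For the existence claim, I would start with any $\rho\in C_c^\infty(\mathbb{R}^d)$ satisfying $\widehat\rho(\xi_0)\ne 0$ for some $\xi_0\in\mathcal{O}$, and apply a constant-coefficient differential operator $D=p(-\tfrac{1}{2\pi i}\partial)$ whose Fourier symbol $p$ is a polynomial vanishing to order $t$ on $\mathcal{O}^c$; such a $p$ exists because, as the complement of an open dual orbit, $\mathcal{O}^c$ is contained in a proper algebraic subvariety. Then $\psi=D\rho$ is compactly supported, $\widehat\psi=p\widehat\rho$ vanishes to order $t$ on $\mathcal{O}^c$, and the derivative and Schwartz-norm requirements are immediate from $\rho\in C_c^\infty$. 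The main technical obstacle is the oscillation step: carefully accounting for the $\|h\|_\infty$-factors produced when a derivative in $u$ strikes either the phase $e^{-2\pi i\langle hy,\xi\rangle}$ or the profile $\widehat\psi((hk)^T\xi)$, and confirming that the calibration $2(s+d+1)$ in Definition \ref{defn:str_temperately_embed} exactly absorbs the worst-case powers that appear.
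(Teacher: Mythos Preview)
Your route via the oscillation decomposition differs from the paper's, which never isolates the oscillation but bounds the local maximum $\mathcal{M}_U^R(\mathcal{W}_\psi\psi)$ directly. The paper applies the decay estimate of Lemma~\ref{lem:decay_est_wc} at the shifted point $(x+hy,hg)$ for $(y,g)\in U=V\times W$, and then separates the variables. The key computation---absent from your sketch---is that $\int_{\mathbb{R}^d}\sup_{y\in V}(1+|x+hy|)^{-k}(1+|x|)^s\,dx\preceq(1+\|h\|_\infty)^k$ with $k=s+d+1$, and \emph{this} is the source of the second factor $(1+\|h\|_\infty)^{s+d+1}$ in the weight $m$. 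The remaining $H$-integrand is $\mathcal{M}_W^R\bigl[(1+\|\cdot\|_\infty)^k|\det|^{1/2}\Phi_{t-k}\bigr](h)$, and Lemma~\ref{lem:max_fun_weight} peels off the submultiplicative weight factors to leave exactly $\|\Phi_{t-k}\|_{W^R(C^0,{\rm L}^1_m)}$. For the existence of compactly supported atoms the paper simply invokes \cite[Lemma~4.1]{Fu_coorbit}, which is essentially your differential-operator construction.

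Your oscillation argument can be pushed through, but as written it has two gaps. First, the $(1+\|h\|_\infty)$-accounting is not right: a \emph{single} mean-value derivative in $u$ contributes only $O(1)$ extra factors of $(1+\|h\|_\infty)$, not $N$ of them, so the mechanism you describe does not by itself produce $(1+\|h\|_\infty)^{2N}$; moreover that derivative costs a power of $A$, so your route actually needs $t>\ell+s+d+2$, slightly stronger than the stated hypothesis. Second, and more seriously, you never say where the \emph{amalgam} hypothesis $\Phi_\ell\in W(C^0,{\rm L}^1_m)$ (rather than merely $\Phi_\ell\in{\rm L}^1_m$) enters. After the mean-value step the envelope necessarily appears as $\Phi_{t-N}(hk')$ with $k'$ ranging over $W$, and $\sup_{k'\in W}\Phi_{t-N}(hk')=\mathcal{M}_W^R\Phi_{t-N}(h)$ is exactly what the amalgam norm controls; invoking only $\Phi_\ell\in{\rm L}^1_m$, as you do, does not close the oscillation estimate.
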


Note that the condition $|\widehat{\psi}|_{t,t} < \infty$ is guaranteed by integrability of $x^\beta \partial^\alpha \psi$, for all multiindices $\alpha,\beta$ of length $\le t$. It is thus fulfilled by all compactly supported $\psi$ with continuous partial derivatives up to order $t$. 

Before we prove the theorem, we need various auxiliary results.
 \begin{lemma} \label{lem:max_fun_weight}
 Let $m$ denote an arbitrary weight on a locally compact group $G$, $f$ a continuous function on $G$, and $U$ a compact neighborhood of the identity. Then there are constants $c_1,c_2$, depending only on $U$ and $m$, such that
 \[
  c_1 \mathcal{M}^R_U (m \cdot f) (x) \le m(x) \mathcal{M}^R_U (f)(x) \le c_2 \mathcal{M}^R_U (m \cdot f)(x)~. 
 \]
\end{lemma}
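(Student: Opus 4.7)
The plan is to unpack the definition of $\mathcal{M}_U^R$ and then use the submultiplicativity of the weight $m$ in both directions to pull $m(x)$ out of (and back into) the supremum, at the cost of a finite multiplicative constant that only depends on $U$ and $m$.

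First I would write
\[
\mathcal{M}_U^R(m \cdot f)(x) \;=\; \sup_{y \in U} m(xy)\,|f(xy)|,
\]
and compare $m(xy)$ to $m(x)$ for $y \in U$. By submultiplicativity, $m(xy) \le m(x)\,m(y)$, and since $m$ is continuous and $U$ is compact, the constant
\[
C_U := \sup_{y \in U} m(y)
\]
is finite. Conversely, $m(x) = m(xy \cdot y^{-1}) \le m(xy)\,m(y^{-1})$, so setting
\[
C_U' := \sup_{y \in U} m(y^{-1}) < \infty
\]
(finite because $U^{-1}$ is compact and $m$ is continuous), we obtain $m(xy) \ge m(x)/C_U'$ for all $y \in U$.

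Plugging these two pointwise comparisons into the supremum immediately gives the desired double inequality. For the upper bound,
\[
m(x)\,\mathcal{M}_U^R(f)(x) = \sup_{y \in U} m(x)\,|f(xy)| \le C_U' \sup_{y \in U} m(xy)\,|f(xy)| = C_U'\,\mathcal{M}_U^R(m\cdot f)(x),
\]
so one may take $c_2 = C_U'$. For the lower bound,
\[
\mathcal{M}_U^R(m\cdot f)(x) = \sup_{y \in U} m(xy)\,|f(xy)| \le C_U\,m(x)\,\sup_{y \in U}|f(xy)| = C_U\,m(x)\,\mathcal{M}_U^R(f)(x),
\]
so $c_1 = 1/C_U$ works.

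There is essentially no obstacle: the only ingredients are submultiplicativity, continuity of $m$, and compactness of $U$ (and of $U^{-1}$, which is automatic). The proof is a two-line sandwich estimate, and the constants $c_1,c_2$ depend only on $\sup_{y \in U} m(y)$ and $\sup_{y \in U} m(y^{-1})$, as claimed.
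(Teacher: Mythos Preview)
Your proof is correct and matches the paper's own argument exactly: the paper simply states that submultiplicativity and continuity of $m$ yield the constants $c_1 = 1/\sup_{y\in U} m(y)$ and $c_2 = \sup_{y\in U} m(y^{-1})$, which are precisely your $1/C_U$ and $C_U'$.
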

\begin{prf}
 Using submultiplicativity and continuity of $m$, one readily verifies the estimates with 
 \[
  c_1 = \frac{1}{\sup_{y \in U} m(y)}~,~ c_2 = \sup_{y \in U} m(y^{-1})~. 
 \]
\end{prf}

We next cite a useful  result \cite[Lemma 3.6]{Fu_coorbit}, which explains the usefulness of the auxiliary functions: They serve as envelope functions for the Fourier transform $\widehat{\psi},\widehat{f}$ and their derivatives, which can be translated to decay estimates for wavelet coefficients. 
\begin{lemma} \label{lem:decay_est_prod}
 Let $\alpha \in \mathbb{N}_0^d$ be a multiindex with $|\alpha| < r$. 
Assume that $f,\psi \in {\rm L}^1(\mathbb{R}^d)$ have vanishing
moments of order $r$ in $\mathcal{O}^c$, and fulfill
$|\widehat{f}|_{r,r-|\alpha|} < \infty, |\widehat{\psi}|_{r,r-|\alpha|} < \infty$.Then there exists a
constant $C>0$, independent of $f$ and $\psi$, such that
\[
 | \partial^\alpha (\widehat{f} \cdot D_h \widehat{\psi})(\xi)| \le
 C |\widehat{f}|_{r,r-|\alpha|} |\widehat{\psi}|_{r,r-|\alpha|} 
(1+\| h \|_\infty)^{|\alpha|}
 A(\xi)^{r-|\alpha|} A(h^T \xi)^{r-|\alpha|}~.
\]
Here we used the notation $D_h \widehat{\psi}: \xi \mapsto \widehat{\psi}(h^T \xi)$. 
\end{lemma}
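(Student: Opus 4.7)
To prove $\psi \in \mathcal{B}_{v_0}$ I need to show $\mathcal{W}_\psi\psi \in W^R(C^0, {\rm L}^1_{v_0})(G)$, i.e., that $\mathcal{M}^R_U(\mathcal{W}_\psi\psi) \in {\rm L}^1_{v_0}(G)$ for a fixed relatively compact product neighborhood $U = V_1 \times V_2 \subset \mathbb{R}^d \times H$ of the identity. My plan is to derive a pointwise upper envelope of the form
\[
|\mathcal{W}_\psi\psi(x,h)| \preceq (1+|x|)^{-(s+d+1)}\,|\det(h)|^{1/2}\,(1+\|h\|_\infty)^{s+d+1}\,\Phi_\ell(h),
\]
in which the $x$ and $h$ dependencies factorize cleanly, then absorb the right maximal operator into $\mathcal{M}^R_{V_2}\Phi_\ell$, and close the estimate by Fubini against the strongly $(s,w_0)$-temperate embedding hypothesis.

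The starting point is the Fourier-side representation
\[
\mathcal{W}_\psi\psi(x,h) = |\det(h)|^{1/2}\int_{\mathbb{R}^d}\widehat\psi(\xi)\,\overline{\widehat\psi(h^T\xi)}\,e^{2\pi i\langle x,\xi\rangle}\,d\xi,
\]
obtained directly from the definition of $\pi$. Repeated integration by parts in $\xi$ gives $|x^\alpha \mathcal{W}_\psi\psi(x,h)| \preceq |\det(h)|^{1/2}\int |\partial^\alpha(\widehat\psi\cdot\overline{D_h\widehat\psi})(\xi)|\,d\xi$, and Lemma \ref{lem:decay_est_prod} applied with $r=t$ (and with $\psi$ replaced by $\overline\psi$ for the second factor, which preserves the vanishing moments and the relevant Schwartz seminorms) bounds the integrand pointwise by a constant multiple of $|\widehat\psi|_{t,t}^{2}\,(1+\|h\|_\infty)^{|\alpha|}\,A(\xi)^{t-|\alpha|}A(h^T\xi)^{t-|\alpha|}$. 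The hypothesis $t > \ell+s+d+1$ guarantees $t-|\alpha|\ge\ell$ for every $|\alpha|\le s+d+1$, so $A(\xi)^{t-|\alpha|}A(h^T\xi)^{t-|\alpha|}\le A(\xi)^\ell A(h^T\xi)^\ell$, and the $\xi$-integral is precisely $\Phi_\ell(h)$. Summing over $|\alpha|\le s+d+1$ and invoking $(1+|x|)^{s+d+1}\preceq\sum_{|\alpha|\le s+d+1}|x^\alpha|$ yields the displayed envelope. The regularity hypothesis $\psi\in C^{\ell+d+1}$ together with $|\widehat\psi|_{t,t}<\infty$ is what legitimises these integrations by parts and realises the distributional vanishing moments classically as needed.

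Converting the envelope to a bound on $\mathcal{M}^R_U(\mathcal{W}_\psi\psi)$ is a matter of careful bookkeeping. For $(y,g)\in V_1\times V_2$, submultiplicativity on the compact $V_2$ gives $(1+\|hg\|_\infty)^{s+d+1}\preceq(1+\|h\|_\infty)^{s+d+1}$ and $|\det(hg)|^{1/2}\preceq|\det(h)|^{1/2}$. Splitting the $x$-space at $|x|=2\|h\|_\infty\sup_{V_1}|y|$ and using $|x+hy|\ge|x|/2$ in the far region and $1+|x|\preceq 1+\|h\|_\infty$ in the near region, one obtains $(1+|x+hy|)^{-(s+d+1)}\preceq(1+|x|)^{-(s+d+1)}(1+\|h\|_\infty)^{s+d+1}$ uniformly in $y\in V_1$. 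Combining these with the envelope at the translated point $(x+hy,hg)$ produces
\[
\mathcal{M}^R_U(\mathcal{W}_\psi\psi)(x,h)\preceq(1+|x|)^{-(s+d+1)}\,|\det(h)|^{1/2}\,(1+\|h\|_\infty)^{2(s+d+1)}\,(\mathcal{M}^R_{V_2}\Phi_\ell)(h).
\]
Multiplying by $v_0(x,h)\le(1+|x|)^s w_0(h)$ and integrating against $dx\,dh/|\det(h)|$, Fubini splits the result into the convergent $\int(1+|x|)^{-(d+1)}dx$ and $\|\mathcal{M}^R_{V_2}\Phi_\ell\|_{{\rm L}^1_m}$ with the weight $m$ of Definition \ref{defn:str_temperately_embed}; the latter is finite by the strongly temperate embedding hypothesis. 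The main technical obstacle is exactly this bookkeeping: the squared exponent $2(s+d+1)$ appearing in the definition of $m$ is forced by combining one power of $(1+\|h\|_\infty)^{s+d+1}$ from the $\xi$-side derivatives with a second from the translation $x\mapsto x+hy$ inside the maximum.

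For the final clause, a compactly supported example is obtained by the standard construction $\psi:=P(D)\phi$, where $\phi\in C^\infty_c(\mathbb{R}^d)$ and $P$ is a polynomial vanishing to order $t$ on $\mathcal{O}^c$. Then $\widehat\psi(\xi)=P(-2\pi i\xi)\widehat\phi(\xi)$ vanishes to order $t$ on $\mathcal{O}^c$, while compact support and smoothness give $|\widehat\psi|_{t,t}<\infty$ by Paley--Wiener. Concrete polynomial choices for each dilation group of interest (e.g.\ iterates of $\Delta$ for the similitude group, where $\mathcal{O}^c=\{0\}$) will be spelled out in Section \ref{sect:examples}.
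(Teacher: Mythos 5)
Your proposal does not prove the statement in question. The statement is Lemma \ref{lem:decay_est_prod}, a pointwise estimate on $\partial^\alpha(\widehat{f}\cdot D_h\widehat{\psi})$ in terms of the envelope function $A$; what you have written is instead an argument for Theorem \ref{thm:main} (membership of $\psi$ in $\mathcal{B}_{v_0}$), and inside that argument you explicitly \emph{invoke} Lemma \ref{lem:decay_est_prod} as a known tool (``Lemma \ref{lem:decay_est_prod} applied with $r=t$ \dots bounds the integrand pointwise by \dots''). So the very estimate you were asked to establish is assumed, not proved. For the record, the paper itself does not reprove this lemma either: it is quoted verbatim from the precursor paper as \cite[Lemma 3.6]{Fu\_coorbit}.

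A genuine proof of the lemma would have to supply three ingredients, none of which appear in your text. First, the Leibniz rule splits $\partial^\alpha(\widehat{f}\cdot D_h\widehat{\psi})$ into terms $\partial^\beta\widehat{f}\cdot\partial^{\alpha-\beta}(D_h\widehat{\psi})$. Second, the chain rule for $\xi\mapsto\widehat{\psi}(h^T\xi)$ shows that each derivative of $D_h\widehat{\psi}$ of order $\gamma$ is a sum of derivatives of $\widehat{\psi}$ evaluated at $h^T\xi$ multiplied by products of at most $|\gamma|$ matrix entries of $h$; this is the source of the factor $(1+\|h\|_\infty)^{|\alpha|}$. Third, and this is the heart of the matter, one needs the single-function estimate $|\partial^\beta\widehat{g}(\xi)|\preceq |\widehat{g}|_{r,r-|\alpha|}\,A(\xi)^{r-|\beta|}$ for any $g$ with vanishing moments of order $r$ in $\mathcal{O}^c$: the first branch of the $\min$ defining $A$ comes from a Taylor expansion of $\partial^\beta\widehat{g}$ of order $r-|\beta|$ around a nearest point of $\mathcal{O}^c$ (where all derivatives up to order $r-1$ vanish), and the second branch comes from the polynomial decay encoded in the Schwartz seminorm $|\widehat{g}|_{r,r-|\alpha|}$. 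Without this Taylor-plus-decay argument there is no route to the claimed bound, so as an answer to the posed question your proposal has a complete gap.
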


Now the following estimate reveals the usefulness of the auxiliary function $\Phi_\ell$, by 
giving an estimate of the wavelet coefficient $\mathcal{W}_\psi \psi$. The chief advantage of this estimate is that it separates the translation and dilation variables. Note that the same estimate applies also to $\mathcal{W}\psi f$, as long as both $\psi$ and $f$ fulfill the conditions of the Lemma. 
\begin{lemma} \label{lem:decay_est_wc}
 Let $0 < m < r$, and let $\psi \in {\rm L}^1(\mathbb{R}^d)$ denote a function with vanishing moments of order $r$ in $\mathcal{O}^c$ and $|\widehat{\psi}|_{r,r} < \infty$. Then 
 \[
 \left|  \mathcal{W}_\psi \psi (x,h) \right| \preceq |\widehat{\psi}|_{r,r}^2 (1+|x|)^{-m}|\det(h)|^{1/2} (1+\| h \|_\infty)^m \Phi_{r-m}(h)~. 
 \]
\end{lemma}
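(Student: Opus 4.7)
\textbf{Proof proposal for Lemma \ref{lem:decay_est_wc}.} The plan is to move to the Fourier side, where the wavelet coefficient becomes a single inverse Fourier transform of a product, and then combine the decay estimate of Lemma \ref{lem:decay_est_prod} with the standard Riemann--Lebesgue-type principle that $L^1$-control of all derivatives of order $\le m$ yields polynomial decay of order $m$ for the inverse Fourier transform.

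First I would use Plancherel together with the identity $\widehat{\pi(x,h)\psi}(\xi) = |\det(h)|^{1/2} e^{-2\pi i \langle x,\xi\rangle}\widehat{\psi}(h^T\xi)$ (which is an immediate consequence of (\ref{eqn:def_quasireg})) to rewrite
\[
\mathcal{W}_\psi\psi(x,h) = |\det(h)|^{1/2}\,\mathcal{F}^{-1}\!\bigl[\widehat{\psi}\cdot\overline{D_h\widehat{\psi}}\bigr](x).
\]
Denote the bracketed product by $g_h$. Under the regularity assumption $\psi\in C^{\ell+d+1}$ (which, as needed elsewhere, I interpret here as furnishing the required smoothness of $\widehat{\psi}$ up to order $m$) together with $|\widehat{\psi}|_{r,r}<\infty$, the partial derivatives $\partial^\alpha g_h$ are all integrable for $|\alpha|\le m$, and the elementary estimate
\[
(1+|x|)^{m}\bigl|\mathcal{F}^{-1} g_h(x)\bigr| \;\preceq\; \sum_{|\alpha|\le m}\|\partial^\alpha g_h\|_1
\]
is a direct consequence of $x^\alpha\mathcal{F}^{-1} g_h(x)=(2\pi i)^{-|\alpha|}\mathcal{F}^{-1}(\partial^\alpha g_h)(x)$ and the trivial inequality $(1+|x|)^m\preceq\sum_{|\alpha|\le m}|x^\alpha|$.

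The core input is then Lemma \ref{lem:decay_est_prod} applied with $f=\psi$: since $|\alpha|\le m<r$, the lemma gives
\[
|\partial^\alpha g_h(\xi)| \;\preceq\; |\widehat{\psi}|_{r,r-|\alpha|}^2\,(1+\|h\|_\infty)^{|\alpha|}\,A(\xi)^{r-|\alpha|}A(h^T\xi)^{r-|\alpha|}.
\]
Integrating this estimate over $\xi$ produces exactly $\Phi_{r-|\alpha|}(h)$ by the definition (\ref{eqn:def_Phi_ell}); using $|\widehat{\psi}|_{r,r-|\alpha|}\le|\widehat{\psi}|_{r,r}$ (since $r-|\alpha|\le r$) yields
\[
\|\partial^\alpha g_h\|_1 \;\preceq\; |\widehat{\psi}|_{r,r}^{2}\,(1+\|h\|_\infty)^{|\alpha|}\,\Phi_{r-|\alpha|}(h).
\]
Finally, Lemma \ref{lem:Phi_ell_elementary}(a) gives $\Phi_{r-|\alpha|}(h)\le\Phi_{r-m}(h)$ for $|\alpha|\le m$, and together with $(1+\|h\|_\infty)^{|\alpha|}\le(1+\|h\|_\infty)^m$ this collapses the sum in the Riemann--Lebesgue-type estimate to a single term of the desired shape. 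Reinserting the prefactor $|\det(h)|^{1/2}$ from the Plancherel step then produces exactly the claimed bound.

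There is no real obstacle in this argument: all work is done by Lemma \ref{lem:decay_est_prod}, and the remaining steps are the Fourier-analytic bookkeeping sketched above. The only care needed is to verify that the product $\widehat{\psi}\cdot\overline{D_h\widehat{\psi}}$ is differentiable of order $m$ with integrable derivatives (so that the $L^1$-bound on $\mathcal{F}^{-1}$ really produces pointwise decay), but this follows from the hypothesis $|\widehat\psi|_{r,r}<\infty$ combined with the Leibniz rule already exploited inside Lemma \ref{lem:decay_est_prod}.
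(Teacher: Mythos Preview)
Your argument is correct and follows essentially the same route as the paper: pass to the Fourier side, use the standard $(1+|x|)^{-m}$ decay from $L^1$-bounds on derivatives of order $\le m$, apply Lemma~\ref{lem:decay_est_prod} termwise, and then use the monotonicity of the Schwartz norms and of $\Phi_\ell$ (Lemma~\ref{lem:Phi_ell_elementary}(a)) to collapse the sum. The parenthetical about $\psi\in C^{\ell+d+1}$ is unnecessary here---the hypothesis $|\widehat{\psi}|_{r,r}<\infty$ together with the vanishing moments is exactly what Lemma~\ref{lem:decay_est_prod} requires, and that lemma already delivers the pointwise (hence integrable) bound on $\partial^\alpha g_h$.
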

\begin{prf} Since 
 \[
\mathcal{W}_{\psi} \psi (x,h) = |{\rm det}(h)|^{1/2} \left(  \widehat{\psi}
\cdot (D_h \overline{\widehat{\psi}}) \right)^{\vee}(x)~, 
\] we can use the standard estimate of decay on the space side by ${\rm L}^1$-norms of derivatives on the Fourier transform side to obtain 
\begin{eqnarray*}
 \left| \mathcal{W}_{\psi} \psi (x,h) \right| & \preceq & |{\rm det}(h)|^{1/2} (1+| x|)^{-m} \sum_{|\alpha| \le m} \left\| 
 \partial^\alpha \left(\widehat{\psi} \cdot D_h \overline{\widehat{\psi}} \right) \right\|_1 \\
 & \preceq &  (1+|x|)^{-m} |{\rm det}(h)|^{1/2} \sum_{|\alpha| \le m} |\widehat{\psi}|_{r,r-|\alpha|}^2 (1+\| h \|_{\infty})^{|\alpha|}  \underbrace{\int_{\mathbb{R}^d} A(\xi)^{r-|\alpha|} 
 A(h^T\xi)^{r-|\alpha|} d\xi}_{= \Phi_{r-|\alpha|}(h)} ~, 
\end{eqnarray*}
where we used Lemma \ref{lem:decay_est_prod}. Now the monotonicity properties of the Schwartz norm and the $\Phi_\ell$ (with respect to their subscripts) allow to estimate
\[
  \sum_{|\alpha| \le m} |\widehat{\psi}|_{r,r-|\alpha|}^2 (1+\| h \|_{\infty})^{|\alpha|} \Phi_{r-|\alpha|}(h)  \preceq  |\widehat{\psi}|_{r,r}^2  (1+\| h \|_{\infty})^{m} \Phi_{r-m}(h)~,
\]
which finishes the proof. 
\end{prf}

{\bf Proof of \ref{thm:main}:}
We fix $V = B_1(x)$ and $W = \{ h \in H : \| h-{\rm id} \|_\infty < 1/2\}$. Then $U = V \times W \subset G$ is a neighborhood of the identity in $G$, and we will use this neighborhood to show finiteness of the amalgam norm $\left\| \mathcal{W}_{\psi} \psi \right\|_{W^R(C^0,{\rm L}^1_{v_0})}$. To this end, let $k= s+d+1$, and use Lemma \ref{lem:decay_est_wc} to estimate as follows: 
\begin{eqnarray}
\nonumber \lefteqn{\left\| \mathcal{W}_{\psi} \psi \right\|_{W^R(C^0,{\rm L}^1_{v_0})}} \\ \nonumber & \le & 
 \int_H \int_{\mathbb{R}^d}  \sup_{y \in V, g \in W} \left| \mathcal{W}_\psi \psi (x+hy,hg) \right| (1+|x|)^s dx ~w_0(h) \frac{dh}{|{\rm det}(h)|} \\ \nonumber
 &  \preceq & \left|\widehat{\psi} \right|_{t,t}^2 \int_H \int_{\mathbb{R}^d} \sup_{y \in V, g \in W} \left(  (1+|x+hy|)^{-k} (1+|x|)^s  
 \vphantom{\Phi_{t-k}(hg)} \right.
 \\ \nonumber
 & &  \left. (1+\| h g \|_\infty)^k |{\rm det}(hg)|^{1/2} \int_{\mathbb{R}^d} A(\xi)^{t-k} A((hg)^T \xi)^{t-k} d\xi \right) dx~ w_0(h) \frac{dh}{|{\rm det}(h)|} 
 \\ & \label{eqn:zwischen}  \le & \left|\widehat{\psi} \right|_{t,t}^2 \int_H \int_{\mathbb{R}^d} \left( \sup_{y \in V}  (1+|x+hy|)^{-k}  \right) (1+|x|)^s dx
 \left( \sup_{g \in W} \Psi(hg) \right) w_0(h) \frac{dh}{|{\rm det}(h)|}~,
\end{eqnarray}
where we introduced the auxiliary function
\[
 \Psi(h) = (1+\| h \|_\infty)^k |{\rm det}(h)|^{1/2}\Phi_{t-k}(h) ~. 
\]
Next, using that $V$ is the unit ball, we find $|x+hy| \ge \max(|x| - |hy|,0) \ge \max(|x|-\| h \|_\infty,0)$, and thus  
\[
 \sup_{y \in V} (1+ |x+hy|)^{-k}  \le \left (1+\max(0, |x|-\|h\|_\infty) \right)^{-k}~.
\]
Hence 
\begin{eqnarray*}
\lefteqn{ \int_{\mathbb{R}^d} \left( \sup_{y \in V}  (1+|x+hy|)^{-k} (1+|x|)^s \right) dx} \\
& \le &  \lefteqn{ \int_{\mathbb{R}^d}   \left (1+\max(0, |x|-\|h\|_\infty) \right)^{-k} (1+|x|)^{s} dx} \\ & = &
\int_{|x|\le \| h \|_\infty} (1+|x|)^s dx + \int_{|x|>\|h \|_\infty} (1+|x|-\| h \|_\infty)^{-k} (1+|x|)^s dx \\
& \le & C_1 (1+\| h \|_\infty)^s \| h \|_\infty^d + \int_{\mathbb{R}^d} (1+\left|~ |x|-\|h \|_\infty~ \right| )^{-k}(1+|x|)^s dx~,
\end{eqnarray*} with $C_1$ denoting the volume of the unit ball.
Now the estimate $\left(1+\left| ~|x|-\|h\|_\infty \right| \right) (1+\|h \|_\infty) \ge (1+|x|)$ yields
\[
\int_{\mathbb{R}^d} (1+\left| ~|x|-\|h \|_\infty ~\right|)^{-k}(1+|x|)^s dx  \le (1+\| h \|_\infty)^k \underbrace{\int_{\mathbb{R}^d} (1+|x|)^{s-k} dx}_{=: C_2}
\] with finite constant $C_2$ since $k-s=d+1$, and we finally arrive at 
\begin{equation}
 \label{eqn:inner_estimate}
 \int_{\mathbb{R}^d} \left( \sup_{y \in V}  (1+|x+hy|)^{-k}\right) (1+|x|)^s  dx \le C (1+\|h \|_\infty)^k~. 
\end{equation}
We can now wrap up the proof: Plugging (\ref{eqn:inner_estimate}) into (\ref{eqn:zwischen}), and applying 
Lemma \ref{lem:max_fun_weight} to the local maximum function $\sup_{g \in W} \Psi(hg)  = \mathcal{M}_U^R \Psi(h)$, we obtain
\begin{eqnarray*}
 \lefteqn{\left\| \mathcal{W}_{\psi} \psi \right\|_{W^R(C^0,{\rm L}^1_{v_0})}} \\ & \preceq & 
 \int_H (1+\|h\|_\infty)^{k} \mathcal{M}_U^R (\Psi)(h) w_0(h) \frac{dh}{|{\rm det}(h)|} \\
 & \preceq &  \int_H \mathcal{M}_U^R (\Phi_{t-k})(h)  (1+\|h\|_\infty)^{2k} w_0(h) |{\rm det}(h)|^{1/2} dh \\
 & = & \| \Phi_{t-k} \|_{W^R(C^0,{\rm L}^1_{m})}~.
\end{eqnarray*}
Since $t-k \ge \ell$, the final expression is finite by assumption, and the sufficient criterion for $\psi \in \mathcal{B}_{v_0}$ is established.  

Regarding existence of compactly supported atoms, we recall from \cite[Lemma 4.1]{Fu_coorbit} the existence of a partial differential operator $D$ of order $k \le 2d$ with constant coefficients and the property that, for all 
$\rho \in {\rm L}^1(\mathbb{R}^d)$ with integrable partial derivatives of sufficiently high order, $D^t \rho$ has vanishing moments of order $t$. Thus picking $\rho \in C_c^\infty(\mathbb{R}^d)$ and letting $\psi = D^t \rho$ yields the desired compactly supported atom. 
 \hfill $\Box$

\section{Verifying strong temperate embeddedness}
\label{sect:examples}

Theorem \ref{thm:main} shows that explicit vanishing moment conditions for elements of $\mathcal{B}_{v_0}$, given a concrete dilation group $H$ and control weight $v_0$, can be obtained by the following steps: 
\begin{enumerate}
 \item Compute the dual orbit $\mathcal{O}$, and the auxiliary function $A: \mathcal{O} \to \mathbb{R}^+$.
 \item Compute an upper estimate of 
 \[ \Phi_\ell : H \ni h \mapsto \int_{\mathbb{R}^d} A(\xi) A(h^T \xi) d\xi ~,\]
 i.e., determine $\widetilde{\Phi}_\ell: H \to \mathbb{R}^+$ with $\Phi_\ell \preceq \widetilde{\Phi}_\ell$. 
 \item Determine $\Psi_\ell$ with  $\mathcal{M}_U^R \widetilde{\Phi}_\ell(\xi) \preceq \Psi_\ell(\xi)$. 
 \item Determine an explicit $\ell$ such that 
 \[
  \int_H \Psi_\ell(h) w_0(h) |{\rm det}(h)|^{-1/2} (1+\| h \|_\infty)^{2(s+d+1)} dh < \infty~.
 \]
\end{enumerate}

In this section we will see that this program can indeed be carried out for a large class of dilation groups, leading to concrete criteria. In particular, our results will cover all admissible dilation groups in dimension two. Note that there is some freedom of choice in picking the upper bounds $\widetilde{\Phi}_\ell$ and $\Psi_\ell$. 

\begin{remark} \label{rem:suboptimal}
In the following calculations, we have typically tried to cover large classes of weights with minimal computational effort, possibly at the cost of suboptimal estimates for $\ell$. Given any concrete group dilation $H$ and a control weight $v_0$ on $G$ of particular interest, better estimates for $\ell$ may be possibly achieved by other methods. 

It should be mentioned at this point that the approach via Theorem \ref{thm:main} is probably not well-suited for obtaining optimal estimates for the number of vanishing moments. Note that the theorem is based on the decay estimate in Lemma \ref{lem:decay_est_wc} which uses the global behaviour of the wavelet and its Fourier transform. It seems likely that for the estimation of local quantities, such as $\mathcal{M}_U^R(\mathcal{W}_\psi \psi)$, this is not the sharpest available method. As an interesting alternative, we mention the techniques of \cite{FeSu}, which achieve similar results with significantly less vanishing moments, but only for isotropic dilations.  
\end{remark}

\subsection{The similitude groups}

The similitude dilation group is defined by $H = \mathbb{R}^+ \cdot SO(d) \subset {\rm GL}(d,\mathbb{R})$, for $d>2$ denote the similitude group. In the interest of a unified treatment, we let $H= \mathbb{R} \setminus \{ 0 \}$ in the case $d=1$. The similitude group dilation group $H$ for $d=2$ was the first higher-dimensional dilation group used for the construction of continuous wavelet transforms \cite{AnMuVa,Mu}. The open dual orbit is easily computed as $\mathcal{O} = \mathbb{R}^d \setminus \{ 0 \}$, hence we find $\mathcal{O}^c = \{ 0 \}$. This implies 
\[
 A(\xi) = \min \left( |\xi|,\frac{1}{1+|\xi|} \right)~. 
\]
We write elements $h \in H$ as $h = r S$, with $r>0$ and $S \in SO(d)$; for $d=1$ we admit $S \in \{ \pm 1 \}$. Haar measure on $H$ is then given by $dh = \frac{dr}{|r|} dS$; here $dS$ denotes integration against Haar measure on $SO(d)$ normalized to one. The following lemma provides the central estimate of the auxiliary function $\Phi_\ell$:
\begin{lemma}
 For $h = rS \in H$ and $\ell>d$, we have
 \[
  \Phi_\ell (h) \le C \min(r^{\ell-d},r^{d-\ell})~.
 \]
\end{lemma}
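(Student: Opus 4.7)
The plan is to reduce the $d$-dimensional integral defining $\Phi_\ell$ to a one-dimensional radial integral by exploiting the rotational symmetry of the similitude group, to handle the cases $r \ge 1$ and $r \le 1$ via the inversion symmetry of Lemma \ref{lem:Phi_ell_elementary}(b), and to bound the resulting radial integral by a three-region split.

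First, since $\mathcal{O}^c = \{0\}$, the auxiliary function is radial: $A(\xi) = \min(|\xi|, 1/(1+|\xi|))$. For $h = rS$ with $S \in SO(d)$, the substitution $\omega = S^T\xi$ is measure-preserving, and radiality gives $A(h^T\xi) = A(rS^T\xi) = A(r\omega)$ while $A(\xi) = A(\omega)$. Passing to polar coordinates in (\ref{eqn:def_Phi_ell}),
\[
 \Phi_\ell(h) = \int_{\mathbb{R}^d} A(\omega)^\ell A(r\omega)^\ell \, d\omega = C_d \int_0^\infty a(t)^\ell a(rt)^\ell t^{d-1} \, dt,
\]
where $a(t) := \min(t, 1/(1+t))$ and $C_d$ denotes the surface area of the unit sphere. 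In particular $\Phi_\ell(rS)$ depends only on $r$.

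Next, since $|{\rm det}(h)| = r^d$ and $h^{-1} = r^{-1} S^{-1}$, Lemma \ref{lem:Phi_ell_elementary}(b) gives $\Phi_\ell(rS) = r^{-d}\,\Phi_\ell(r^{-1}S^{-1})$. Hence, proving the bound $\Phi_\ell(h) \le C r^{d-\ell}$ for $r \ge 1$ automatically yields $\Phi_\ell(h) \le C r^{\ell - d}$ for $r \le 1$, giving the desired symmetric conclusion.

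For $r \ge 1$ I would split the radial integral at $t = 1/r$ and $t = 1$. On $(0, 1/r]$, both $t$ and $rt$ lie in $[0,1]$, so applying $a(s) \le s$ twice bounds the integrand by $r^\ell t^{2\ell + d - 1}$, contributing $O(r^{-\ell - d})$. On $(1/r, 1]$, the estimates $a(t) \le t$ and $a(rt) \le 1/(rt)$ yield integrand $\le r^{-\ell} t^{d-1}$, contributing $O(r^{-\ell})$. On $(1, \infty)$, both arguments exceed $1$, so $a(s) \le 1/s$ gives integrand $\le r^{-\ell} t^{d - 1 - 2\ell}$; the integral converges at infinity as soon as $2\ell > d$ (implied by $\ell > d$) and contributes $O(r^{-\ell})$. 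Summing the three pieces yields $\Phi_\ell(h) \le C r^{-\ell} \le C r^{d-\ell}$ for $r \ge 1$, as required. No substantive obstacle arises beyond the bookkeeping of the three regimes; the hypothesis $\ell > d$ is used only to guarantee tail convergence.
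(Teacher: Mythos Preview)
Your proof is correct and follows essentially the same strategy as the paper: reduce to a radial integral via rotational invariance, split the positive half-line into three regions, and use the inversion identity of Lemma~\ref{lem:Phi_ell_elementary}(b) to transfer the estimate between the cases $r\ge 1$ and $r\le 1$. The only cosmetic differences are that the paper treats $r\le 1$ first and splits at the exact crossover point $c=(\sqrt{5}-1)/2$ and $c/r$, whereas you split at $1$ and $1/r$ and use the universally valid bounds $a(s)\le s$ and $a(s)\le 1/s$; your version is arguably a little cleaner for exactly this reason.
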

\begin{prf}
 First note that by invariance of the euclidean distance under elements of $SO(d)$, we have $A(h \xi) = A(r \xi)$, and thus $\Phi_\ell(h) = \Phi_\ell  (r \cdot {\rm id}_{\mathbb{R}^d})$, and integration in polar coordinates yields
 \begin{eqnarray*}
  \Phi_{\ell}(h)  & = & \int_{\mathbb{R}^d} A(\xi)^\ell A(h^T \xi)^\ell d\xi \\
   & = & C_1 \int_0^\infty \min \left( s, \frac{1}{1+s} \right)^\ell \min \left( rs, \frac{1}{1+rs} \right)^\ell s^{d-1} ds~,
 \end{eqnarray*}
 where $C_1$ is the surface of the unit ball. 
 Let us now assume that $r \le 1$; the case $r>1$ will then be addressed using Lemma \ref{lem:Phi_ell_elementary}(b). 
 First note that 
 \[
  \min \left( s, \frac{1}{1+s} \right) = \left\{ \begin{array}{cc} s & s \le c \\ \frac{1}{1+s} & s > c \end{array} \right.  
 \]
where we used $c = \frac{\sqrt{5}-1}{2}$.  This implies 
 \begin{equation}
  s^{d-1} \min \left( s, \frac{1}{1+s} \right)^\ell \min \left( rs, \frac{1}{1+rs} \right)^\ell =  \left\{ \begin{array}{cc}
                                                         s^{2 \ell+d-1} r^\ell & 0 < s \le c \\ 
                                                         \frac{r^\ell s^{\ell+d-1}}{(1+s)^\ell} & c < s \le c/r \\
                                                         \frac{s^{d-1}}{(1+s)^\ell (1+rs)^\ell} & c/r < s 
                                                                                                 \end{array} \right.~,
 \end{equation}
and hence
\[
\Phi_\ell(r S)/C_1  = \left. \left( r^\ell \frac{s^{2 \ell+d}}{2 \ell +d} \right) \right|_{s=0}^{s=c} 
+ r^\ell \underbrace{\int_c^{c/r} \frac{s^{\ell+d-1}}{(1+s)^\ell} ds}_{=:I_1} + \underbrace{\int_{c/r}^\infty   \frac{s^{d-1}}{(1+s)^\ell (1+rs)^\ell} ds}_{=: I_2} ~.
\]
Now $\frac{s^\ell}{(1+s)^\ell} \le 1$ implies 
\[
 I_1 \le r^\ell \int_c^{c/r} s^{d-1} ds \le C_2 r^{\ell-d} 
\] with a suitable constant $C_2>0$. Furthermore, for $s>c/r$, we have
\[
 \frac{s^{d-1}}{(1+s)^\ell} \le (1+s)^{d-1-\ell} \le (1+c/r)^{d-1-\ell} \le C_3 r^{\ell+1-d}
\] which implies that
\[
 I_2 \le \int_{c/r}^\infty \frac{C_3 r^{\ell+1-d}}{(1+rs)^\ell} ds =
  C_3 r^{\ell-d} \int_{c}^\infty \frac{1}{(1+s)^\ell} ds = C_4 r^{\ell-d}~. 
\]
In summary, this yields 
\[
 \Phi_\ell(h) \le C_5 r^{\ell-d}~. 
\] If $r>1$, then 
\[
 \Phi_\ell(h) = |{\rm det}(h)|^{-1} \Phi_\ell(h^{-1}) \le r^{-d} C_5 r^{d-\ell} = C_5 r^{-\ell}~.
\]
This finishes the proof. 
\end{prf}

\begin{theorem} \label{thm:temp_embed_similitude}
 Assume that the control weight on $H$ fulfills
 \[
  w_0(h) \le (r+r^{-1})^\beta
 \] for some $\beta>0$. Then the dual orbit is $(s,w_0)$-strongly temperately embedded, with index
 $\ell = \beta + 2s + \frac{5}{2} d +3$.
\end{theorem}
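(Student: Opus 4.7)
The strategy is to invoke the upper bound $\Phi_\ell(rS) \le C\min(r^{\ell-d},r^{d-\ell})$ from the preceding lemma (valid for $\ell>d$), control the right maximum function by the same radial expression, and then verify integrability against the weight $m$ with the claimed choice of $\ell$. First I would fix a compact product neighborhood $U = [1-\epsilon,1+\epsilon]\cdot V \subset H$ of the identity, with $V\subset SO(d)$ a compact symmetric neighborhood of ${\rm id}$. Since $A(\omega)$ depends only on $|\omega|$, the function $\Phi_\ell(rS)$ depends only on $r$; in particular $\Phi_\ell((rS)(r'S')) = \Phi_\ell(rr'\cdot {\rm id})$. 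Using the lemma,
\[
\mathcal{M}_U^R\Phi_\ell(rS) \;\le\; C\sup_{r'\in[1-\epsilon,1+\epsilon]}\min\bigl((rr')^{\ell-d},(rr')^{d-\ell}\bigr),
\]
and since $r'$ ranges over a compact subinterval of $(0,\infty)$ and each branch is monotone in its argument, this supremum is bounded by a constant multiple of $\min(r^{\ell-d},r^{d-\ell})$.

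Plugging this into the definition of the $W(C^0,{\rm L}^1_m)$-norm and using $\|rS\|_\infty=r$, $|{\rm det}(rS)|=r^d$, $w_0(rS)\le(r+r^{-1})^\beta$, together with the Haar measure $dh=r^{-1}\,dr\,dS$, one obtains
\[
\|\Phi_\ell\|_{W(C^0,{\rm L}^1_m)}\;\preceq\;\int_0^\infty \min\bigl(r^{\ell-d},r^{d-\ell}\bigr)(r+r^{-1})^\beta r^{-d/2}(1+r)^{2(s+d+1)}\frac{dr}{r}.
\]
Splitting the integral at $r=1$, the integrand behaves like $r^{\ell-3d/2-\beta-1}$ near $0$ (integrable iff $\ell>3d/2+\beta$) and like $r^{-\ell+5d/2+\beta+2s+1}$ near $\infty$ (integrable iff $\ell>5d/2+\beta+2s+2$). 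With $\ell = \beta+2s+\tfrac{5}{2}d+3$, both conditions hold strictly, so the integral is finite and hence $\Phi_\ell \in W(C^0,{\rm L}^1_m)$, which is precisely strong $(s,w_0)$-temperate embeddedness at this index.

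The main obstacle is really only bookkeeping: one must check that the product form of $U$ is compatible with the $SO(d)$-invariance of $\Phi_\ell$ so that the right maximum function does not inflate the bound by an $r$-dependent factor, and that the exponent count at $0$ and $\infty$ picks up no hidden term (in particular that the $-1$ in $dr/r$, the $-d/2$ from $|{\rm det}|^{-1/2}$, and the $2(s+d+1)$ from $(1+\|h\|_\infty)^{2(s+d+1)}$ are all tallied correctly). Both are routine once one observes that $r\mapsto\min(r^{\ell-d},r^{d-\ell})$ attains its maximum $1$ at $r=1$ and decays polynomially away from there, so the transition at $r=1$ is absorbed into the multiplicative constant. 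No further structural input about the similitude group is needed beyond the polar decomposition $h=rS$ and the radial nature of $A$.
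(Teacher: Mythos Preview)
Your proposal is correct and follows essentially the same route as the paper: bound $\Phi_\ell$ by the radial function $\min(r^{\ell-d},r^{d-\ell})$ via the preceding lemma, control the local maximum by the same expression up to a constant, and then reduce to the one-dimensional integral in $r$ whose convergence at $0$ and $\infty$ yields the stated threshold on $\ell$. The only cosmetic difference is that the paper handles $\mathcal{M}_U^R$ by observing that $\Psi_\ell(rS)=\min(r^{\ell-d},r^{d-\ell})$ is submultiplicative and invoking Lemma~\ref{lem:max_fun_weight}, whereas you argue directly from the product structure of $U$ and the $SO(d)$-invariance of $\Phi_\ell$; both arguments give the same constant-factor bound.
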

\begin{prf}
 Since the mapping $\Psi_\ell: h = rS \mapsto \min(r^{d-\ell},r^{\ell-d})$ is submultiplicative, Lemma \ref{lem:max_fun_weight} implies that $c_1 \Psi_\ell \le \mathcal{M}_U^R \Psi_\ell \le c_2 \Psi_\ell$, with suitable constants $0< c_1 \le c_2$. We have $|{\rm det}(h)| = r^d$ and $\| h \|_\infty = r$, and by the previous lemma $\Phi_\ell \le C \Psi_\ell$. Hence it is sufficient to prove that the integral
 \[
  \int_{0}^\infty  (r+r^{-1})^\beta (1+r)^{2(s+d+1)} r^{-d/2} \min\left( r^{d-\ell},r^{\ell-d} \right) \frac{dr}{r}
 \] is finite. 
This is the case as soon as $\ell> \beta + 2s + \frac{5}{2}d+2$. 
\end{prf}

We will next exhibit homogeneous Besov spaces as coorbit spaces over the similitude group, a fact that has already been noted in \cite[7.2]{FeiGr0}. Since the argument for higher dimensions is only sketched in \cite{FeiGr0}, I include a short proof that combines results from \cite{Fu_coorbit} with the $\varphi$-transform characterization due to Frazier and Jawerth. 
\begin{theorem} \label{thm:Besov_as_coorbit}
For all $1 \le p,q < \infty$, we have $\dot{B}_{p,q}^{\alpha}(\mathbb{R}^d) = Co({\rm L}^{p,q}_v(G))$, with weight function
\[
 v(x,h) = v(x,rS) = r^{-\alpha-d/2+d/q}~.
\] 
\end{theorem}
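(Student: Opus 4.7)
The plan is to use the coorbit atomic decomposition (Theorem \ref{thm:at_dec}) with a specific Frazier--Jawerth type analyzing wavelet, and to recognize the resulting discrete coorbit coefficient space as the standard sequence-space characterization of $\dot B^\alpha_{p,q}$ due to Frazier and Jawerth.

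First, I would select $\psi \in \mathcal{S}(\mathbb{R}^d)$ radial with $\widehat\psi$ supported in the dyadic annulus $\{1/2 \le |\xi| \le 2\} \subset \mathcal{O} = \mathbb{R}^d \setminus \{0\}$, normalized so that $\sum_{j \in \mathbb{Z}} |\widehat\psi(2^{-j}\xi)|^2 = 1$ for $\xi \neq 0$. The weight $v(x,rS) = r^{-\alpha-d/2+d/q}$ has the form (\ref{eqn:defn_v}) with $s=0$; hence by (\ref{eqn:cont_weight_sep}) it admits a control weight $v_0$ whose $H$-factor is bounded by some $w_0(h)$ of the form covered by Theorem \ref{thm:bl_atoms}. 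Consequently $\psi \in \mathcal{B}_{v_0}$, so Theorem \ref{thm:at_dec} is applicable.

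Second, I would apply Theorem \ref{thm:at_dec} with the sampling set $Z = \{(h_j x_k, h_j) : j \in \mathbb{Z}, k \in \mathbb{Z}^d\}$ where $h_j = 2^{-j} \cdot \mathrm{id}$ and $x_k = \delta k$, choosing $\delta>0$ small enough that $Z$ is $U$-dense (possible by Remark \ref{rem:ex_rel_sep}; the radial symmetry of $\psi$ renders the rotation coordinate superfluous). A direct computation yields $\pi(h_j x_k, h_j)\psi(y) = 2^{jd/2}\psi(2^j y - \delta k) =: \psi_{j,k}(y)$, the $L^2$-normalized dyadic wavelet. Theorem \ref{thm:at_dec}(c) then identifies $Co({\rm L}^{p,q}_v(G))$ as the set of $f$ for which $(\langle f, \psi_{j,k}\rangle)_{j,k}$ lies in the discrete coefficient space $({\rm L}^{p,q}_v)_d$, with equivalent norms.

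Third, by Remark \ref{rem:coeff_space} the norm on $({\rm L}^{p,q}_v)_d$ reduces, after substituting $|\det h_j| = 2^{-jd}$ and $v(\cdot, h_j) = 2^{j(\alpha+d/2-d/q)}$ and a routine collection of exponents, to
\[
\left(\sum_{j \in \mathbb{Z}} 2^{jq(\alpha + d/2 - d/p)} \left(\sum_{k \in \mathbb{Z}^d} |\langle f, \psi_{j,k}\rangle|^p\right)^{q/p}\right)^{1/q},
\]
which is precisely the Frazier--Jawerth sequence-space norm characterizing $\dot B^\alpha_{p,q}$ \cite{FJW}. The compactness of $SO(d)$ ensures that integrating over the rotation variable produces only a finite multiplicative constant.

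The main obstacle is reconciling the two reservoirs: $\dot B^\alpha_{p,q}$ is conventionally realized as a quotient of $\mathcal{S}'$ modulo polynomials, while $Co({\rm L}^{p,q}_v)$ lives in the larger space $\mathcal{A}_{v_0}^{\sim}$. Because $\widehat\psi$ vanishes in a neighbourhood of the origin (and $\mathcal{O}^c = \{0\}$), $W_\psi$ annihilates every polynomial, so the coorbit construction automatically factors them out; a short argument parallel to the corresponding discussion in \cite{Fu_coorbit} shows that the resulting quotient is canonically identified with the standard distribution space underlying $\dot B^\alpha_{p,q}$.
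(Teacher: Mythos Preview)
Your argument has a genuine gap in the second step. Theorem \ref{thm:at_dec} guarantees the atomic decomposition only for sampling sets that are $U$-dense, where $U$ is a neighborhood of the identity in $G$ \emph{determined by the wavelet $\psi$}; you do not get to choose $U$. Your proposed sampling set fixes the dilation sample points at the dyadic values $h_j = 2^{-j}\cdot\mathrm{id}$, and shrinking $\delta$ only refines the \emph{translation} grid. There is no reason why the integer powers of $2$ should be $U$-dense in the scale direction of $H$: the neighborhood $U$ produced by Theorem \ref{thm:at_dec} could easily be smaller than the scale interval $(1/2,2)$. (The rotation issue is less serious---radial symmetry of $\psi$ does make the $SO(d)$-coordinate constant in $\mathcal{W}_\psi f$---but Remark \ref{rem:ex_rel_sep} does not supply the density statement you invoke; it only asserts that \emph{some} $U$-dense separated set exists.)

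The paper's proof circumvents this by treating the two inclusions asymmetrically. For $\dot B^\alpha_{p,q} \subset Co({\rm L}^{p,q}_v)$ it does \emph{not} invoke Theorem \ref{thm:at_dec} at all: it uses the synthesis result \cite[Theorem 6.1(ii)]{FeiGr1}, which only requires the dyadic lattice $Z$ to be separated, to show that the Frazier--Jawerth expansion of $f$ converges in the coorbit norm; identification of the limit with $f$ then uses the continuous embedding $Co({\rm L}^{p,q}_v) \hookrightarrow \mathcal{S}'/\mathcal{P}$. For the reverse inclusion it refines the scale sampling to $2^{j/m}$ (and throws in finitely many rotations) to obtain a genuinely $U$-dense set $Z' \supset Z$; Theorem \ref{thm:at_dec}(c) then gives $(\langle f,\pi(z')\varphi\rangle)_{z'\in Z'} \in \ell^{p,q}_v(Z')$, and restricting to the subset $Z$ yields the Frazier--Jawerth criterion for $f \in \dot B^\alpha_{p,q}$. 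Your approach could be salvaged along these lines, but as written the direct appeal to Theorem \ref{thm:at_dec} with purely dyadic scales is not justified.
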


\begin{prf}
 We will use the $\varphi$-transform characterization of Frazier and Jawerth, see \cite{FJW}. The $\varphi$-transform is based on the choice of two isotropic Schwartz functions $\varphi, \psi$ with Fourier transforms compactly supported away from zero, and satisfying (amongst other properties)
\[
 \forall f \in \mathcal{S}'(\mathbb{R}^d)/\mathcal{P}~:~ f = \sum_{j \in \mathbb{Z}}
 f \ast \varphi_j \ast \psi_j~,
\] with convergence in $\mathcal{S}'(\mathbb{R}^d)$ modulo polynomials. Here $\varphi_j (x) = 2^{-j} \varphi(2^{-j}x) = 2^{-j/2} (\pi(0,2^j) \varphi)(x)$, and $\psi_j$ is defined analogously.  $\mathcal{P} \subset \mathcal{S}'(\mathbb{R}^d)$ denotes the subspace of polynomials, and the series converges in $\mathcal{S}'(\mathbb{R}^d)/\mathcal{P}$.  
The associated discrete wavelet systems are then defined (in the terminology of this paper) via
\[
 \varphi_{j,k} = \pi(2^jk, 2^j) \varphi~,~\psi_{j,k} = \pi(2^jk,2^j)\psi~.
\] Here we have already somewhat adapted the notation of \cite{FJW} to the terminology of this paper, in particular the indexing conventions for small vs. large scales used here differ from \cite{FJW}. The norms of the associated discrete coefficient spaces are defined by
\[
 \left\| (c_{j,k})_{j,k} \right\|_{\dot{b}_{p,q}^\alpha} = \left(\sum_{j \in \mathbb{Z}} \left( \sum_{k \in \mathbb{Z}^d} \left( 2^{-j\alpha - jd/2+jd/p} |c_{j,k}| \right)^p \right)^{p/q} \right)^{1/q} 
\] with the usual adjustments in the cases $p=\infty$ and/or $q=\infty$. 
We now define the separated subset
\[
 Z = \{ (2^jk,2^j \cdot {\rm id}_{\mathbb{R}^d}): j \in \mathbb{Z}, k \in \mathbb{Z}^d \} \subset G
\]
which is in obvious bijective correspondence to $\mathbb{Z} \times \mathbb{Z}^d$. This bijection then induces an isometric isomorphism
 \[
 \dot{b}_{p,q}^\alpha \cong \ell^{p,q}_v (Z)~ 
\] which we will use to identify the two spaces at our convenience.

Now \cite[Theorem 6.16]{FJW} states that $f \in \dot{B}_{p,q}^\alpha(\mathbb{R}^d)$ iff $(\langle f, \varphi_{j,k} \rangle)_{j,k} \in \dot{b}_{p,q}^\alpha$, and we will use this characterization to show that $\dot{B}_{p,q}^\alpha(\mathbb{R}^d) = Co({\rm L}^{p,q}_v(G))$

To see the inclusion $\dot{B}_{p,q}^\alpha(\mathbb{R}^d) \subset Co({\rm L}^{p,q}_v(G))$, first recall by
 (\ref{eqn:cont_weight_sep}) that there exists a control weight for $L^{p,q}_v(G)$ satisfying the assumption of \ref{thm:bl_atoms}. Thus this theorem yields that both $\varphi$ and $\psi$ are in the space $\mathcal{B}_{v_0}$. 
 Moreover, the set $\{ (2^jk, 2^j) : j \in \mathbb{Z} , k \in \mathbb{Z}^d \}$ is a separated subset of $G$. Hence, if $f \in \dot{B}_{p,q}^\alpha(\mathbb{R}^d)$, the sum
\[
 \sum_{j,k} \langle f, \varphi_{j,k} \rangle \pi(2^jk,2^j) \psi 
\] converges in
$Co ({\rm L}^{p,q}_v(G))$, with respect to the norm of that space, by \cite[Theorem 6.1(ii)]{FeiGr1}. Here we used $(\langle f, \varphi_{j,k} \rangle)_{j,k} \in \dot{b}_{p,q}^\alpha$, the observation that for $Y = {\rm L}^{p,q}_V(G)$, the norm equivalence$\| (c_{j,k})_{j,k} \|_{Y_d} \asymp \| (c_{j,k})_{j,k} \|_{\dot{b}^q_{\alpha,p}}$ holds (see  Remark \ref{rem:coeff_space}), and the density of the finitely supported sequences in the coefficient space (this is why we exclude the value $\infty$ for $p$ and/or $q$). Since $Co({\rm L}^{p,q}_v(G)) \subset \mathcal{S}'(\mathbb{R}^d)/\mathcal{P}$  continuously (see the remarks following \cite[Corollary 4.5]{Fu_coorbit}), we find that the $Co({\rm L}^{p,q}_v)$-limit of the sum coincides with $f$. Hence $f \in Co({\rm L}^{p,q}_v(G))$. 

Conversely, assume that $f\in Co ({\rm L}^{p,q}_{v}(G))$, then the fact that $\varphi \in \mathcal{B}_{v_0}$ allows to invoke Theorem \ref{thm:at_dec}. Hence, for sufficiently large $m \in \mathbb{N}$, and a suitable finite set $\mathcal{S} \subset SO(d)$ of rotations (which we can assume to contain the identity matrix), the set 
\[
 Z' = \left\{ \left(2^{j/m} S \frac{k}{m}, 2^{j/m}S \right) : j, k \in \mathbb{Z}, S \in \mathcal{S} \right\} \subset G 
\] will be such that \ref{thm:at_dec}(c) applies, yielding that 
\[
 \left( \langle f, \pi(z') \varphi \rangle \right)_{z' \in Z'} \in \ell^{p,q}_v(Z')~.
\]
But since $Z \subset Z'$, this implies $(\langle f, \varphi_{j,k} \rangle)_{j,k} \in \dot{b}_{p,q}^\alpha$, and thus $f \in \dot{B}_{p,q}^\alpha(\mathbb{R}^d)$. 
\end{prf}

\begin{remark} \label{rem:proof_ex_1_2}
Let us now work out concrete vanishing moment conditions for atoms in $\dot{B}^{\alpha}_{p,q}(\mathbb{R}^d)$. 
By  (\ref{eqn:cont_weight_sep}) (with $s=0$), there exists a control weight for ${\rm L}_v^{p,q}(G)$ that is majorized by 
\begin{eqnarray*}
\lefteqn{w_0(x,rS) = }\\   & = &  \max \left(1, \Delta_G(0,rS)^{-1} \right) \left(|{\rm det}(rS)|^{1/p-1/q}+|{\rm det}(rS)|^{1/q-1/p} \right)  (r^{-\alpha-d/2+d/q}+r^{\alpha+d/2-d/q}) \\
 & \le & \left( r^{-d}+r^d \right)^2 (r^{-\alpha-d/2+d/q}+r^{\alpha+d/2-d/q}) \\
 & \preceq & (r+r^{-1})^{2 d+|\alpha-d/2+d/q|}~.
 \end{eqnarray*}
 Thus vanishing moments of order $t> |\alpha-d/2+d/q| + \frac{11}{2} d +3$ will suffice, by
 Theorems \ref{thm:temp_embed_similitude} and \ref{thm:main}. Applying this to the case $\alpha = d/2-d/q$ and $p=q$, we obtain  Theorem \ref{thm:ex_Besov} by combining Theorem \ref{thm:at_dec} with Proposition \ref{lem:nl_approx_frame}.
\end{remark}

\subsection{The diagonal groups}

The diagonal group of dimension $d$ is
\begin{equation} \label{eqn:def_diag_group}
 H = \left\{ \left( \begin{array}{cccc} a_1 &  & &  \\ & a_2 &  & \\ & & \ddots  &  \\
                     & & & a_d \end{array} \right) : \prod_{i=1}^d a_i \not= 0 \right\}~.
\end{equation}
The open dual orbit is given by $\mathcal{O} = \{ \xi = (\xi_1,\ldots,\xi_d)^T \in \mathbb{R}^d: \prod_i \xi_i \not= 0 \}$. The auxiliary function $A$ is given by 
\[
 A(\xi) = \min \left( \frac{\min_{i} |\xi_i|}{1+\sqrt{|\xi|^2- \min_{i} |\xi_i|^2}},\frac{1}{1+|\xi|} \right)~. 
\]
The control weights we are interested in are of the type 
\[  v_0(x,h) = (1+|x|+\|h\|_\infty)^s w_0(h) ~,\]
where $w_0(h)  = \prod_{i=1}^d \left(a_i+a_i^{-1}\right)^\alpha$.
The diagonal group is in fact a special case of a diagonally acting direct product group. Hence we first prove the following, somewhat more general result, which is of independent interest. 

\begin{lemma}
 Let $H_j < {\rm GL}(d_j,\mathbb{R})$ be admissible dilation groups, with $j=1,2,\ldots, k$, and $\sum_{j=1}^k d_j =d$, and let $H < {\rm GL}(d,\mathbb{R})$ be defined as
 \[
  H = \left\{ \left( \begin{array}{cccc} h_1 &  &  &  0 \\ & h_2 & & \\ & & \ddots  & \\  
                    0 & & & h_k \end{array} \right) : h_j \in H_j, j=1,\ldots,k\right\}~.
 \] Then $H$ is admissible. Let $v_0$ denote a weight on $G = \mathbb{R}^d \rtimes H$, satisfying the estimate
 \[
  v_0(x,h) = (1+|x|+\|h\|_\infty)^s \prod_{j=1}^k w_j(h_j)  ~, h =\left( \begin{array}{cccc} h_1 &  & & 0 \\ & h_2 & & \\ & & \ddots  & \\  
                     0 & & & h_k \end{array} \right)~,
 \] with weights $w_j$ on $H_j$. Let $\mathcal{O}_j$ denote the open dual orbit of $H_j$, and $\mathcal{O}$ the open dual orbit of $H$. If $\mathcal{O}_j$ is $(s,w_j)$-temperately embedded with index $\ell_j$, for $j=1,2,\ldots,k$, then $\mathcal{O}$ is $(s,w)$-temperately embedded with index $\ell = k \max(\ell_1,\ell_2,\ldots,\ell_k)$. 
\end{lemma}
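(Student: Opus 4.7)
The plan is to exploit the block-diagonal structure of $H$ throughout, so that the dual action, the auxiliary function $A$, and the function $\Phi_\ell$ all factorize (or at least admit product upper bounds) across the components $H_j$. This will reduce the verification of strong temperate embeddedness for $\mathcal{O}$ to a Fubini-style product of the assumed conditions for the $\mathcal{O}_j$.

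First I would dispatch the admissibility of $H$. Writing $\xi = (\xi^{(1)},\ldots,\xi^{(k)}) \in \mathbb{R}^{d_1}\times\cdots\times\mathbb{R}^{d_k}$, the dual action of $H$ is $h^T\xi = (h_1^T\xi^{(1)},\ldots,h_k^T\xi^{(k)})$, so picking $\xi_0=(\xi_0^{(1)},\ldots,\xi_0^{(k)})$ with $\xi_0^{(j)} \in \mathcal{O}_j$ gives the open, full-measure orbit $\mathcal{O} = \mathcal{O}_1\times\cdots\times\mathcal{O}_k$ with stabilizer equal to the product of the (compact) stabilizers of the $\xi_0^{(j)}$ in $H_j$. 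Thus $H$ is admissible.

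The heart of the proof is the pointwise comparison
\[
A_H(\xi) \le A_{H_j}(\xi^{(j)}) \quad \text{for each } j=1,\ldots,k.
\]
Indeed, $|\xi|^2 = \sum_i |\xi^{(i)}|^2 \ge |\xi^{(j)}|^2$ handles the $(1+|\xi|)^{-1}$ term. For the other term, observe that $\{\eta \in \mathbb{R}^d : \eta^{(j)} \in \mathcal{O}_j^c\} \subset \mathcal{O}^c$, so $r := \mathrm{dist}(\xi,\mathcal{O}^c) \le r_j := \mathrm{dist}(\xi^{(j)},\mathcal{O}_j^c)$. A short calculation then gives $|\xi|^2-r^2 \ge |\xi^{(j)}|^2-r_j^2$, so the ratio in the definition of $A$ is majorized by its counterpart in $A_{H_j}$. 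Setting $\ell' = \max_j \ell_j$ and $\ell = k\ell'$, I would split $A_H(\xi)^\ell = \prod_j A_H(\xi)^{\ell'}$ and apply the comparison to each factor with a different $j$, yielding
\[
A_H(\xi)^\ell A_H(h^T\xi)^\ell \le \prod_j A_{H_j}(\xi^{(j)})^{\ell'} A_{H_j}(h_j^T\xi^{(j)})^{\ell'}.
\]
Fubini and Lemma \ref{lem:Phi_ell_elementary}(a) then give $\Phi_{\ell,H}(h) \le \prod_j \Phi_{\ell_j,H_j}(h_j)$.

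It remains to bound $\|\Phi_{\ell,H}\|_{W^R(C^0,\mathrm{L}^1_{m_H})}$. Choosing a product neighborhood $U = U_1 \times \cdots \times U_k$ of the identity, the local maximum function of a product is the product of the local maxima, so $\mathcal{M}_U^R\bigl(\prod_j \Phi_{\ell_j,H_j}\bigr)(h) = \prod_j \mathcal{M}_{U_j}^R \Phi_{\ell_j,H_j}(h_j)$. Using $w_0(h) = \prod_j w_j(h_j)$, $|\det(h)| = \prod_j |\det(h_j)|$, and the product form of Haar measure on $H$, the integral defining the amalgam norm splits across the factors by Fubini. The main obstacle is that $\|h\|_\infty = \max_j \|h_j\|_\infty$ is not multiplicative; I would handle this via the elementary inequality $(1+\max_j a_j)^n \le \prod_j(1+a_j)^n$ to dominate
\[
(1+\|h\|_\infty)^{2(s+d+1)} \le \prod_j (1+\|h_j\|_\infty)^{2(s+d_j+1)} \cdot \prod_j (1+\|h_j\|_\infty)^{2(d-d_j)}.
\]
The first product feeds directly into the weights $m_j$; the extra polynomial factors $(1+\|h_j\|_\infty)^{2(d-d_j)}$ must be absorbed by the slack between $\ell_j$ and $\ell' = \max_j \ell_j$, using that $\Phi_{\ell,H_j}$ picks up additional decay as $\ell$ grows. (If the hypothesis as stated is not quite sufficient to absorb these extra factors, one would enlarge the index to $\ell = k\ell'$ with $\ell' = \max_j\ell_j + 2(d - d_j)$, or equivalently verify that the assumed strong embeddedness of each $\mathcal{O}_j$ leaves enough room.) Once absorbed, Fubini yields $\|\Phi_{\ell,H}\|_{W^R(C^0,\mathrm{L}^1_{m_H})} \preceq \prod_j \|\Phi_{\ell_j,H_j}\|_{W^R(C^0,\mathrm{L}^1_{m_j})} < \infty$, completing the proof.
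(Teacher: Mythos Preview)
Your proposal follows essentially the same route as the paper: the admissibility argument via product orbits and stabilizers, the pointwise estimate $A_H(\xi) \le A_{H_j}(\xi^{(j)})$ (and hence $A_H(\xi)^k \le \prod_j A_{H_j}(\xi^{(j)})$), and the Fubini factorization of $\Phi_\ell$ are exactly what the paper does. The paper's proof is in fact shorter than yours at the final step---it simply writes ``the assumptions on the $H_j$, together with $(1+\|h\|_\infty)^s \le \prod_{j}(1+\|h_j\|_\infty)^s$, yield the desired result''---without spelling out the amalgam-norm factorization or the handling of the local maximum function.

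You are right to flag the mismatch between the exponent $2(s+d+1)$ in the weight $m_H$ and the exponents $2(s+d_j+1)$ in the weights $m_j$. The paper's proof glosses over precisely this point (it only mentions the exponent $s$, not the full $2(s+d+1)$), so your concern is legitimate: with the index $\ell = k\max_j \ell_j$ exactly as stated, the extra factors $(1+\|h_j\|_\infty)^{2(d-d_j)}$ are not obviously absorbed by the hypothesis, and neither your argument nor the paper's closes this without either enlarging $\ell$ as you suggest or invoking additional slack in the $\Phi_{\ell_j}$. In short, your proof is at least as complete as the paper's, and your parenthetical fix is the honest way to repair the statement.
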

\begin{prf}
 Obviously, $H$ has a unique open orbit given by $\mathcal{O} = \prod_{j=1}^k \mathcal{O}_j $, and the associated fixed groups are the direct products of the fixed groups in $H_j$, respectively. These are compact by assumption, hence $H$ is admissible as well.  
 
 Let $A: \mathcal{O} \to \mathbb{R}^+$ and $A_j: \mathcal{O}_j \to \mathbb{R}^+$ ($j=1,\ldots,k$) denote the associated envelope functions. Then we have, for $\xi = (\xi_1,\xi_2,\ldots,\xi_k)$ with $\xi_j \in \mathcal{O}_j$: 
 \begin{equation} \label{eqn:est_A_prod}
  A(\xi)^k \le \prod_{j=1}^k A_j(\xi_j)~.
 \end{equation} To see this, let  
 $\eta_j \in \mathcal{O}_j^c \subset\mathbb{R}^{d_j}$ denote elements of minimal distance to $\xi_j$, i.e. 
 $|\xi_j-\eta_j| = {\rm dist}(\xi_j, \mathcal{O}_j^c)$. 
 Let $j_0 \in \{ 1,\ldots, k \}$ be arbitrary, and let $\eta = (\xi_1,\ldots,\xi_{j_0-1},\eta_{j_0},\xi_{j_0+1},\ldots,\xi_k)^T \in \mathcal{O}^c$. Then we have   
 \[ {\rm dist}(\xi,\mathcal{O}^c) \le |\xi-\eta| = |\xi_{j_0} - \eta_{j_0}| ~,
 \]
 as well as 
 \[
  \sqrt{|\xi|^2-{\rm dist}(\xi,\mathcal{O}^c)^2} \ge \sqrt{|\xi|^2-|\xi-\eta|^2} = \sqrt{|\xi_{j_0}|^2-|\eta_{j_0}|^2}~.
 \]
Combining the two estimates, we get  
 \[
  \frac{{\rm dist}(\xi,\mathcal{O}^c)}{1+\sqrt{|\xi|^2-{\rm dist}(\xi,\mathcal{O}^c)^2}} 
  \le \frac{|\xi_{j_0}-\eta_{j_0}|}{1-\sqrt{|\xi_{j_0}|^2-|\eta_{j_0}|^2}} \le A_{j_0}(x_{j_0})~.
 \]
Together with the obvious estimate $\frac{1}{1+|\xi|} \le \frac{1}{1+|\xi_{j_0}|}$, this yields
$A(\xi) \le A_{j_0}(\xi_{j_0})$, and taking the product over $j_0=1,\ldots,k$ yields (\ref{eqn:est_A_prod}). 

Now Fubini's theorem immediately implies that  $\Phi_{\ell k}(h_1,\ldots,h_k) \le \prod_{j=1}^k \Phi_{\ell,j}(h_j)$, where $\Phi_{m} : H \to \mathbb{R}^+ \cup \infty$ and $\Phi_{\ell,j}: H_j \to \mathbb{R}^+ \cup \{ \infty \}$ are the functions defined according to (\ref{eqn:def_Phi_ell}). Note that here we used that left Haar measure on $H$ is the product of left Haar measures on the $H_j$. Now the assumptions on the $H_j$, together with 
$(1+\|h \|_\infty)^s  \le \prod_{j=1}^k (1+\| h_j \|_\infty)^s$, yield the desired result.  
 \end{prf}

 We can now combine the lemma with Theorem \ref{thm:temp_embed_similitude}, to obtain the following result.
\begin{corollary} \label{cor:diag_group}
Let $H$ be defined by (\ref{eqn:def_diag_group}), and assume that the control weight on $H$ fulfills
 \[
  w_0(h) \le \prod_{a=1}^d (|a_j|+|a_j|^{-1})^\alpha
 \] for some $\alpha>0$. Then the dual orbit is $(s,w_0)$-strongly temperately embedded, with index
 $\ell = d (\alpha + 2s + 11/2)$.
\end{corollary}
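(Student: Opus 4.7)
The corollary asks nothing genuinely new: it is a straight-up combination of the two preceding results applied to the decomposition of $H$ as a diagonal direct product.

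First I would identify $H$ of (\ref{eqn:def_diag_group}) with the $k=d$-fold direct product of the one-dimensional similitude groups $H_j = \mathbb{R}\setminus\{0\}$, each acting on $\mathbb{R}^{d_j}=\mathbb{R}$ in the natural way. Under this identification, the block-diagonal matrix $h = \mathrm{diag}(a_1,\ldots,a_d)$ corresponds to $(a_1,\ldots,a_d) \in H_1 \times \cdots \times H_d$, the assumed bound
\[
 w_0(h) \le \prod_{j=1}^d (|a_j|+|a_j|^{-1})^\alpha
\]
displays the control weight as a product of single-factor weights $w_j(a_j) = (|a_j|+|a_j|^{-1})^\alpha$, and the hypothesis of the preceding lemma is verified.

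Next I would apply Theorem \ref{thm:temp_embed_similitude} separately to each factor $H_j$. In that theorem the parameter ``$d$'' stands for the ambient dimension of the factor, which here is $d_j = 1$, and $\beta = \alpha$. The theorem therefore guarantees that the dual orbit $\mathcal{O}_j = \mathbb{R}\setminus\{0\}$ is $(s,w_j)$-strongly temperately embedded with index
\[
 \ell_j \;=\; \alpha + 2s + \tfrac{5}{2}\cdot 1 + 3 \;=\; \alpha + 2s + \tfrac{11}{2}.
\]
Finally I would feed these indices into the direct-product lemma, which yields that the full dual orbit $\mathcal{O}=\prod_{j=1}^d\mathcal{O}_j$ is $(s,w_0)$-strongly temperately embedded with index
\[
 \ell \;=\; k\,\max_j \ell_j \;=\; d\bigl(\alpha + 2s + \tfrac{11}{2}\bigr),
\]
which is the stated conclusion.

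There is essentially no obstacle to overcome, since both ingredients are already in place; the only caveat is a bookkeeping check that Theorem \ref{thm:temp_embed_similitude} applies in dimension $d_j=1$ (the author explicitly admits this case at the start of the similitude subsection by setting $H=\mathbb{R}\setminus\{0\}$ for $d=1$), and that the additive weight bound $(1+|x|+\|h\|_\infty)^s$ on $G$ is compatible with the factorization $(1+\|h\|_\infty)^s\le\prod_{j=1}^d(1+\|h_j\|_\infty)^s$ used in the direct-product lemma. With those trivia verified, the corollary follows by concatenating the two results.
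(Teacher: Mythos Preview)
Your proposal is correct and follows exactly the route the paper takes: the corollary is obtained by viewing the diagonal group as the $d$-fold direct product of one-dimensional similitude groups, applying Theorem~\ref{thm:temp_embed_similitude} to each factor (with ambient dimension $1$ and $\beta=\alpha$) to get $\ell_j=\alpha+2s+\tfrac{11}{2}$, and then invoking the preceding direct-product lemma to obtain $\ell=d\max_j\ell_j$. Your bookkeeping checks (the $d=1$ case of the similitude group and the factorization of $(1+\|h\|_\infty)^s$) are precisely the points the paper relies on implicitly.
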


\subsection{Shearlet groups in dimension two}

Fix a real parameter $c$, and let
\[
H =  H_c = \left\{ \left( \begin{array}{cc} a & b \\ 0 & a^c \end{array}
 \right)~:~a, b \in \mathbb{R}, a\not= 0 \right\}~.
\] Here we use the convention $a^c = {\rm sign}(a) |a|^c$ for $a < 0$. 
For $c=1/2$, $H_c$ is the shearlet group introduced in
\cite{KuLa}, and further studied (e.g.) in \cite{DaKuStTe,DaStTe10}, see also \cite{shearlet_book} for a comprehensive overview. The other groups are obviously closely related; the parameter $c$ can be understood as controlling the anisotropy used in the scaling.
 Haar measure on $H$  is given by $db \frac{da}{|a|^{2}}$, the modular function is $\Delta_H(h) = |a|^{c-1}$. The dual orbit is
computed as
\[ \mathcal{O} =
 \mathbb{R}^2 \setminus (\{ 0 \} \times \mathbb{R}))~.\]
For $h = \left( \begin{array}{cc} a & b \\ 0 & a^c
\end{array}
 \right) \in H$ and $\xi_0 = (1,0)^T \in \mathcal{O}$, we obtain $h^T \xi_0 = (a,b)^T$.

 One computes 
 \[
 A(\xi) = \min \left( \frac{|\xi_1|}{1+|\xi_2|},\frac{1}{1+|\xi|}\right)~. 
 \]
  
 We now come to the critical Step 2 in the program outlined at the beginning of this section. The following estimate is central to this subsection: 
 \begin{lemma} \label{lem:est_shearlet_coeff}
  Let $\Phi_\ell: H \to \mathbb{R}^+$ be defined according to  (\ref{eqn:def_Phi_ell}).
 Assume that  $t, r_1,r_2 \ge 1$ are positive integers satisfying $r_2>1$ as well as 
\begin{equation} \label{eqn:Phi_ell_est_cond}
 t \ge 3 r_1+ (3+6|c|) r_2 + 6|c|+2~. 
\end{equation}
 Then, for all $h= \left( \begin{array}{cc} a & b \\ 0 & a^c \end{array} \right)  \in H$, we have the estimate
\begin{equation}  \label{eqn:Phi_ell_est}
 \Phi_t(h) \le C (|a|+|a|^{-1})^{-r_1} (1+|b|)^{-r_2}~,
\end{equation}
with a suitable constant $C>0$.
 \end{lemma}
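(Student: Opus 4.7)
My plan is to bound $\Phi_t(h) = \int_{\mathbb{R}^2} A(\xi)^t A(h^T\xi)^t\,d\xi$ by distributing the exponent $t$ among several elementary pointwise bounds for $A$. From the explicit formula and $|\xi|\ge\max(|\xi_1|,|\xi_2|)$ one has the inequalities $A(\xi)\le|\xi_1|$, $A(\xi)\le (1+|\xi_2|)^{-1}$, $A(\xi)\le(1+|\xi|)^{-1}$, and correspondingly $A(h^T\xi)\le|a\xi_1|$, $A(h^T\xi)\le(1+|b\xi_1+a^c\xi_2|)^{-1}$, $A(h^T\xi)\le(1+|h^T\xi|)^{-1}$. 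Because $A\le 1$, any product of these bounds whose exponents sum to at most $t$ majorizes $A(\xi)^t A(h^T\xi)^t$, so one can choose the exponents so as to manufacture the desired decay factors $(|a|+|a|^{-1})^{-r_1}$ and $(1+|b|)^{-r_2}$.

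I would treat the cases $|a|\le 1$ and $|a|\ge 1$ separately; the identity $\Phi_t(h)=|\det h|^{-1}\Phi_t(h^{-1})$ from Lemma \ref{lem:Phi_ell_elementary}(b) does not reduce one case to the other, since inversion mixes $a$ and $b$. Consider first $|a|\le 1$, where the target is $|a|^{r_1}(1+|b|)^{-r_2}$. I extract the $|a|^{r_1}$ factor from $A(h^T\xi)^{r_1+r_2}\le\bigl(|a\xi_1|/(1+|b\xi_1+a^c\xi_2|)\bigr)^{r_1+r_2}$ using $|a|^{r_1+r_2}\le|a|^{r_1}$. For fixed $\xi_1\neq 0$ I then perform the change of variable $w=b\xi_1+a^c\xi_2$ in the $\xi_2$-integration (Jacobian $|a|^{-c}\,dw$). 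The excess powers of $A(\xi)$, used to give factors of the form $(1+|\xi_2|)^{-q}$, transform into $|a|^{cq}(|a|^c+|w-b\xi_1|)^{-q}$, and the inner integral takes the convolution-type form $\int(|a|^c+|w-b\xi_1|)^{-q}(1+|w|)^{-(r_1+r_2)}\,dw$. Splitting this at $|w|=|b\xi_1|/2$ and using $r_2>1$ for integrability, one obtains a bound of order $(|a|^c+|b\xi_1|)^{-r_2}$ times a polynomial in $|a|^c$. A second splitting of the $\xi_1$-integration at $|\xi_1|\sim 1/|b|$, using the $|\xi_1|^{r_1+r_2}$ factor extracted earlier from $A(h^T\xi)^{r_1+r_2}$ to compensate the singularity at $\xi_1=0$, then produces the desired $(1+|b|)^{-r_2}$ factor together with a convergent outer integral (for which one uses the remaining excess powers of $A(\xi)$ and $A(h^T\xi)$).

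The case $|a|\ge 1$ is handled analogously, extracting $|a|^{-r_1}$ from $A(h^T\xi)^{r_1}\le(1+|h^T\xi|)^{-r_1}\le(1+|a\xi_1|)^{-r_1}$ via the substitution $\eta_1=a\xi_1$, and producing $(1+|b|)^{-r_2}$ by the same convolution-type argument in $\xi_2$. The main obstacle, and the source of the $6|c|$ in the hypothesis $t\ge 3r_1+(3+6|c|)r_2+6|c|+2$, is the careful bookkeeping of the powers of $|a|^c$ generated by each substitution and by the bound $(1+|\xi_2|)^{-1}\mapsto|a|^c(|a|^c+|w-b\xi_1|)^{-1}$: these must be compensated by extracting additional powers of $A(\xi)$ or $A(h^T\xi)$, contributing a cost of order $|c|r_2$ per use. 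Distributing the exponents so that (i) integrability at infinity in the inner $w$-integral holds, (ii) integrability of the outer $\xi_1$-integral holds, (iii) the target decay in $|a|$ and $|b|$ is achieved, and (iv) the total exponent does not exceed $t$, yields the stated lower bound on $t$.
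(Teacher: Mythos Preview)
Your treatment of the case $|a|\le 1$ is essentially the paper's argument, just phrased differently: your change of variable $w=b\xi_1+a^c\xi_2$ followed by a splitting of the inner integral is exactly what the cited convolution estimate \cite[Lemma 3.1]{DaStTe11} encodes, and the subsequent $\xi_1$-integration proceeds the same way. So for $|a|\le 1$ there is no real difference.

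The genuine divergence is in how you handle $|a|\ge 1$, and here your dismissal of the inversion identity is mistaken. You write that $\Phi_t(h)=|\det h|^{-1}\Phi_t(h^{-1})$ ``does not reduce one case to the other, since inversion mixes $a$ and $b$.'' It is true that $h^{-1}=\left(\begin{smallmatrix} a^{-1} & -a^{-c-1}b\\ 0 & a^{-c}\end{smallmatrix}\right)$ has shear parameter $-a^{-c-1}b$ rather than $b$, but this mixing is precisely accounted for by proving a \emph{stronger} bound in the first step. The paper does not aim merely for $|a|^{r_1}(1+|b|)^{-r_2}$ when $|a|\le 1$; it establishes $\Phi_t(h)\preceq |a|^{u_1-|c|(1+r_2)}(1+|b|)^{-r_2}$ with $u_1=r_1+(1+2|c|)r_2+2|c|$. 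The extra margin $u_1-|c|(1+r_2)-r_1=(1+|c|)r_2+|c|$ is exactly what is needed to absorb the factors $|a|^{-1-c}$ from $|\det h|^{-1}$ and $|a|^{(1+|c|)r_2}$ from the estimate $(1+|a^{-c-1}b|)^{-r_2}\le |a|^{(1+|c|)r_2}(1+|b|)^{-r_2}$ after inversion. This is where the specific form of the hypothesis on $t$ comes from.

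Your proposed direct argument for $|a|\ge 1$ (substitute $\eta_1=a\xi_1$, redo the convolution estimate) may well be made to work, but as written it is too sketchy to verify that it stays within the stated exponent budget $t\ge 3r_1+(3+6|c|)r_2+6|c|+2$; you would have to redo the bookkeeping from scratch and check that the same bound on $t$ suffices. The inversion route avoids this duplication entirely.
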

\begin{prf}
We first prove a somewhat stronger estimate for $|a|\le 1$, and then use Lemma \ref{lem:Phi_ell_elementary}(b) for the other case. 
Let 
\begin{equation}
 u_1 = r_1 + (1+2|c|)r_2+2|c|~,~ u_2 = 2u_1+2 =  2 r_1+(2+4|c|) r_2 + 4|c| +2~.
\end{equation}
Then we have $u_1 + u_2 \le t$, and thus we obtain, for all $1< u_3 \le u_1$
\begin{equation} \label{eqn:est_A_ell}
 \forall \xi = (\xi_1,\xi_2)^T \in \mathcal{O}~:~A(\xi)^t \le \frac{|\xi_1|^{u_1}}{(1+|\xi_1|)^{u_2}}
 \frac{1}{(1+|\xi_2|)^{u_3}}~. 
\end{equation}
For $\xi \in \mathcal{O}$, we have $h^T (\xi) = (a\xi_1, b \xi_1+ a^c \xi_2)^T$, and hence plugging (\ref{eqn:est_A_ell}) into the definition of $\Phi_t$ yields
\begin{eqnarray} \nonumber 
 \Phi_t(h) & = & \int_{\mathbb{R}^2} A(\xi)^t A(h^T \xi)^t d\xi \\  \nonumber 
 & \le & \int_{\mathbb{R}}  \frac{|\xi_1|^{u_1}}{(1+|\xi_1|)^{u_2}}  \frac{|a \xi_1|^{u_1}}{(1+|a \xi_1|)^{u_2}} \\ \label{eqn:zwisch1}
 & & ~~\int_{\mathbb{R}} (1+|\xi_2|)^{-u_3} (1+|b \xi_1 +a^c \xi_2|)^{-u_3} d\xi_2 d \xi_1 ~.
\end{eqnarray}
We now employ the estimate
\begin{equation}
 \label{eqn:DaStTe} \int_{\mathbb{R}} (1+|y|)^{-r} (1+\alpha |x-y|)^{-r} dy \le
 C\left(\alpha^{-1} (1+|x|)^{-r} + (1+\alpha|x|)^{-r} \right)~,
\end{equation}
(see \cite[Lemma 3.1]{DaStTe11}) with $r=u_3$, $\alpha = |a|^c$ and $x = a^{-c} b \xi_1$, to continue from 
(\ref{eqn:zwisch1}) to obtain
\[
 \Phi_t(h) \le C(I_1+I_2)
 \]
 with
 \[
  I_ 1 =  \int_{\mathbb{R}}  \frac{|\xi_1|^{u_1}}{(1+|\xi_1|)^{u_2}}  \frac{|a \xi_1|^{u_1}}{(1+|a \xi_1|)^{u_2}} \frac{|a|^{-c}}{(1+|a^{-c} b \xi_1|)^{u_3}} d\xi_1 
 \] and
 \[
  I_2 = \int_{\mathbb{R}}  \frac{|\xi_1|^{u_1}}{(1+|\xi_1|)^{u_2}}  \frac{|a \xi_1|^{u_1}}{(1+|a \xi_1|)^{u_2}}\frac{1}{(1+| b \xi_1|)^{u_3}} d\xi_1 ~.
 \]
Now
\begin{eqnarray*}
 I_1 & = & |a|^{u_1-c} \int_{\mathbb{R}} \frac{|\xi_1|^{2u_1}}{(1+|\xi_1|)^{u_2}(1+|a\xi_1|)^{u_2}
 (1+|a^{-c}b\xi_1|)^{u_3}} d\xi_1 \\
 & = & |a|^{u_1-c} |a^{-c} b|^{-u_3} \int_{\mathbb{R}}
  \frac{|\xi_1|^{2u_1-u_3}}{(1+|\xi_1|)^{u_2}(1+|a\xi_1|)^{u_2}} \underbrace{\frac{|a^{-c} b \xi_1|^{u_3}}{(1+|a^{-c} b \xi_1|)^{u_3}}}_{\le 1} d\xi_1 \\
  & \le & |a|^{u_1-c+c u_3} |b|^{-u_3} C \int_{\mathbb{R}} \frac{\max\{ 1,|\xi_1|^{2u_1}}{(1+|\xi_1|)^{u_2}} d\xi_1~,
\end{eqnarray*}
where we used that $2u_1-u_3 \ge 2$. 
Our choice of constants implies $u_2 - 2u_1 = 2$, hence the integral converges.  Since this holds for all $1<u_3 \le r_2$, we thus obtain, using $|a| \le 1$,
\begin{equation} \label{eqn:est_I_1}~
 I_1 \preceq |a|^{u_1-|c|(1+r_2)} (1+|b|)^{-r_2}~. 
\end{equation}
For $I_2$, we obtain with a similar calculation
\begin{eqnarray*}
 I_2 & = & |a|^{u_1} |b|^{-u_3} \int_{\mathbb{R}} \frac{|\xi_1|^{2u_1-u_3}}{{(1+|\xi_1|)^{u_2}(1+|a\xi_1|)^{u_2}}} \underbrace{\frac{|b \xi_1|^{u_3}}{(1+|b \xi_1|)^{u_3}}}_{\le 1} d\xi_1 \\
 & \preceq & |a|^{u_1} |b|^{-u_3}~,
\end{eqnarray*}
and since $1< u_3 \le r_2$ was arbitrary, we find for $|b| \ge 1$: 
\[
 I_2 \preceq |a|^{u_1} (1+|b|)^{-r_2}~.
\]
But this means that for the case $|a|\le 1$, $|b| \ge 1$ we have in fact established
\begin{equation} \label{eqn:est_Phi_ell_HM}
 \Phi_t(h) \preceq |a|^{u_1-|c|(1+r_2)} (1+|b|)^{-r_2}~,
\end{equation} which is stronger than (\ref{eqn:Phi_ell_est}), since $u_1 \ge r_1+|c|(1+r_2) $. 

To establish (\ref{eqn:est_Phi_ell_HM}) for $|a|\le 1$ and $|b| \le 1$, we employ similar (but easier) estimates to derive for $j=1,2$
\[
 I_j \preceq |a|^{u_1-c} \preceq |a|^{u_1-|c|(1+r_2)} (1+|b|)^{-r_2}~.
\]

In order to apply Lemma \ref{lem:Phi_ell_elementary} to the case $|a|>1$, we first compute $h^{-1} = \left( \begin{array}{cc} a^{-1} & -a^{-c-1}b \\ 0 & a^{-c} \end{array} \right)$.  
Hence Lemma \ref{lem:Phi_ell_elementary} and (\ref{eqn:est_Phi_ell_HM}) yield
\begin{eqnarray*}
 \Phi_t(h) & = & \underbrace{|{\rm det}(h)|^{-1}}_{=|a|^{-1-c}} \Phi_t(h^{-1}) \\
 & \preceq &  |a|^{-1-c}|a|^{-u_1+|c|(1+r_2)} (1+|a^{-c-1}b|)^{-r_2}  \\
 & \le & |a|^{-u_1+|c|(2+r_2)} |a|^{(1+|c|)r_2} (1+|b|)^{-r_2} \\
 & = & |a|^{-u_1+|c|(2+r_2)+(1+|c|)r_2} (1+|b|)^{-r_2} \\
 & = & |a|^{-r_1} (1+|b|)^{-r_2}~,
\end{eqnarray*}
by choice of $u_1$. This proves the Lemma. 
\end{prf}

\begin{remark}
 Combining Lemma \ref{lem:decay_est_wc} with (\ref{eqn:Phi_ell_est}) yields a decay estimate for shearlet coefficients, that is possibly of independent interest:
 \[
  |\mathcal{W}_{\psi} f (x,h)| \le C (1+|x|)^{-m} (|a|+|a|^{-1})^{-r_1} (1+|b|)^{-r_2}~,
 \]
which holds for all shearlets $\psi$ and signals $f$ fulfilling sufficient vanishing moment, smoothness and decay conditions. Note that the additional factors $(1+\|h \|_{\infty})^m$ and $|{\rm det}(h)|^{1/2}$ occurring in 
Lemma \ref{lem:decay_est_wc} are absorbed by suitably high powers of $(|a|+|a|^{-1})^{-1}(1+|b|)^{-1}$. 
\end{remark}

The following theorem establishes strong temperate embeddedness. We use the same $w_0$  as in \cite{DaKuStTe}; note that the parametrization of the dilation group used in that paper differs from the one employed here.  
\begin{theorem}
 \label{thm:shearlet_str_embed}
 Let $w_0(h) = (|a|+|a|^{-1})^{u_1} (|a|+|a|^{-1}+|a^c b|)^{u_2}$, for some $u_1,u_2>0$. Then $\mathcal{O}$ is strongly $(s,w_0)$-temperately embedded with index 
 \begin{equation} \label{eqn:ind_te_shearlet}
  \ell = \left\lceil 3 u_1 + (9+9|c|)u_2 + 18(1+|c|) s + \frac{147}{2} |c| + \frac{163}{2} \right\rceil ~. 
 \end{equation}
\end{theorem}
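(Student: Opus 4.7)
The goal is to verify $\Phi_\ell \in W(C^0,{\rm L}^1_m)$. The strategy is to produce a pointwise upper bound on $\mathcal{M}_U^R \Phi_\ell(h)$ of the separated shape $(|a|+|a|^{-1})^{\alpha}(1+|b|)^{\beta}$ and then check integrability against $m$ with respect to left Haar measure $\tfrac{da\,db}{|a|^2}$ on $H$. The decisive input is Lemma \ref{lem:est_shearlet_coeff}, which already delivers decay of exactly this separated form: for integer parameters $r_1\ge 1$, $r_2>1$ satisfying $\ell\ge 3r_1+(3+6|c|)r_2+6|c|+2$, one has $\Phi_\ell(h)\le C(|a|+|a|^{-1})^{-r_1}(1+|b|)^{-r_2}$. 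The remainder of the argument consists in choosing $r_1, r_2$ large enough to absorb both the polynomial growth of $m$ and the loss incurred by the maximal function, and then reading off the resulting admissible $\ell$.

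The first nontrivial step is the maximal-function estimate. Fix $U\subset H$ to be a compact neighborhood of the identity with $a'\in[1/2,2]$ and $|b'|\le 1$. For $g\in U$, the product $hg$ has diagonal entry $aa'\asymp a$, so the first factor remains comparable to $(|a|+|a|^{-1})^{-r_1}$. For the upper-right entry $ab'+b(a')^c$ one splits into the regime $|b|\gg|a|$ (where cancellation is impossible, $|ab'+b(a')^c|\succeq|b|$, and the decay $(1+|b|)^{-r_2}$ survives up to a constant) and its complement (where the sup is at most $1$, but in this regime $|b|\preceq|a|$, so $1\preceq (|a|+|a|^{-1})^{r_2}(1+|b|)^{-r_2}$). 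Combining yields
\[
\mathcal{M}_U^R\Phi_\ell(h)\preceq (|a|+|a|^{-1})^{-r_1+r_2}(1+|b|)^{-r_2}.
\]
Next, I would estimate $m(h)=w_0(h)|\det h|^{-1/2}(1+\|h\|_\infty)^{2(s+d+1)}$ using $|\det h|=|a|^{1+c}$, the crude bound $\|h\|_\infty\le|a|+|b|+|a|^c$, the inequality $|a|^c\le(|a|+|a|^{-1})^{|c|}$, and iterated use of $1+X+Y\le(1+X)(1+Y)$, to arrive at
\[
m(h)\preceq (|a|+|a|^{-1})^{N_1}(1+|b|)^{N_2}
\]
with $N_1=u_1+(1+|c|)u_2+(1+|c|)(2s+6)+|1+c|/2$ and $N_2=u_2+2s+6$ (using $d=2$).

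Combining the two bounds and applying Fubini, $\|\mathcal{M}_U^R\Phi_\ell\|_{{\rm L}^1_m}$ factors into one-dimensional integrals: the $b$-part converges iff $r_2>N_2+1$, while the $a$-part against $|a|^{-2}\,da$, after splitting $|a|\gtrless 1$ and using $|a|+|a|^{-1}\asymp\max(|a|,|a|^{-1})$, converges iff $r_1>r_2+N_1+1$. Plugging the smallest admissible integer choices of $r_1, r_2$ into the threshold $\ell\ge 3r_1+(3+6|c|)r_2+6|c|+2$ yields, after expansion, the index in (\ref{eqn:ind_te_shearlet}). The main obstacle is the maximal-function step itself: right-translation sends $b$ to $ab'+b(a')^c$, so a portion of the $|b|$-decay is forfeited and must be paid for by a compensating $(|a|+|a|^{-1})^{r_2}$ in the $a$-direction. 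This surcharge is precisely what lifts the coefficient of $u_2$ in (\ref{eqn:ind_te_shearlet}) from the naive $6+9|c|$ (which one would obtain by pretending $r_1\approx N_1$) to $9+9|c|$, and similarly triples the $s$-prefactor to $18(1+|c|)$; matching the exact fractional constants $\tfrac{147}{2}|c|$ and $\tfrac{163}{2}$ will require careful bookkeeping of the $|1+c|/2$ term from $|\det h|^{-1/2}$ together with the integrality buffers coming from the strict integrability inequalities.
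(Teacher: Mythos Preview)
Your proposal is correct and follows essentially the same route as the paper: invoke Lemma \ref{lem:est_shearlet_coeff} for the separated bound on $\Phi_\ell$, absorb the right-maximal function by paying an extra $(|a|+|a|^{-1})^{r_2}$ (the paper arrives at the identical exponent $r_2-r_1$ via the inequality $(1+|a|)\preceq(|a|+|a|^{-1})$ rather than your case split, but the outcome is the same), bound $m(h)$ by a product of the form $(|a|+|a|^{-1})^{N_1}(1+|b|)^{N_2}$, and read off the integrability thresholds. The only residual differences are bookkeeping: the paper absorbs the Haar factor $|a|^{-2}$ into the $(|a|+|a|^{-1})$-exponent (contributing $+2$) and uses $(1+|c|)/2$ in place of your slightly sharper $|1+c|/2$, which is exactly the kind of detail you flag as needed to match the constants in (\ref{eqn:ind_te_shearlet}).
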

\begin{prf}
Introduce 
\[
 r_2 = u_2+2s+8
\]
and 
\begin{eqnarray*}
 r_1 & =&  2 + r_2 + u_1 + u_2(1+|c|) + 2(1+|c|) s + \frac{15}{2} |c| + \frac{17}{2} \\
  & = & u_1 + u_2 (2+|c|) + 2(2+|c|) s + \frac{13}{2}|c| + \frac{37}{2}~.
\end{eqnarray*}
Then one readily verifies
\[
 \ell \ge 3 r_1+ (3+6|c|) r_2 + 6|c|+2~,
\] hence Lemma \ref{lem:est_shearlet_coeff} is applicable. 

 We fix the neighborhood $U$ of the identity element in $H$ as 
 \[
  U = \left\{ \left( \begin{array}{cc} a_1 & b_1 \\ 0 & a_1^c \end{array} \right) : 1/2 < a_1 < 2~, ~|b_1|< 1 \right\}~.
 \]
Given $h = \left( \begin{array}{cc} a & b \\ 0 & a^c \end{array} \right) \in H$, we employ (\ref{eqn:Phi_ell_est}) to estimate
\begin{eqnarray*} 
 \mathcal{M}_U^R(\Phi_t)(h) & \preceq & \sup \left\{ \left(|aa_1|+|aa_1|^{-1} \right)^{-r_1} (1+|ab_1
 +a_1^c b|)^{-r_2} ~:~1/2 < a_1 < 2, |b_1|< 1 \right\} \\
 & \le & 2^{r_1} \left(|a|+|a|^{-1} \right)^{-r_1} \sup_{1/2 < a_1 < 2, |b_1|\le 1} (1+|ab_1
 +a_1^c b|)^{-r_2} \\
 & \preceq & \left(|a|+|a|^{-1}\right)^{-r_1} (1+|a|)^{r_2} (1+|b|)^{-r_2} \\
 & \preceq & \left(|a|+|a|^{-1}\right)^{r_2-r_1} (1+|b|)^{-r_2}~.
\end{eqnarray*}
A left Haar measure on $H$ is given by $db \frac{da}{|a|^{2}}$, and instead of the operator norm, we take the equivalent norm $\| h \| = |a|+|a^c|+|b|$.  Hence we need to estimate
\begin{eqnarray*}
 I & = & \int_{\mathbb{R}'} \int_{\mathbb{R}}  (|a|+|a|^{-1})^{u_1} (|a|+|a|^{-1}+|a^c b|)^{u_2} |a|^{-(1+c)/2}    \\  & &  (1+|a|+|a|^c+|b|)^{2(s+3)}  \left(|a|+|a|^{-1}\right)^{r_2-r_1} (1+|b|)^{-r_2} db \frac{da}{|a|^{2}}  ~.
\end{eqnarray*}
Using the estimates
\[
 (|a|+|a|^{-1}+|a^c b|) \le (|a|+|a|^{-1})^{1+|c|} (1+|b|)
\]
and
\[
 (1+|a|+|a|^c +|b|) \preceq (|a|+|a|^{-1})^{1+|c|}(1+|b|)~, 
\] we find that 
\[
 I \preceq \int_{\mathbb{R}} (|a|+|a|^{-1})^{e_1} da \int_{\mathbb{R}} (1+|b|)^{e_2} db~,
\]
with exponents
\begin{eqnarray*}
 e_1 & = &  r_2 - r_1 + u_1 + u_2(1+|c|) + 2(s+3)(1+|c|) + \frac{1}{2} |c| + \frac{5}{2} \\
  & = & r_2 - r_1 + u_1 + u_2 (1+|c|) + 2(1+|c|) s + \frac{13}{2} |c| + \frac{17}{2}
\end{eqnarray*}
and 
\[
 e_2 = -r_2 + u_2 + 2s + 6 ~. 
\]
Our choice of $r_1,r_2$ implies that $e_1=e_2 = -2$. Hence $I < \infty$, and we are done.
\end{prf}

 \begin{remark} \label{rem:van_mom_shearlet}
  In order to formulate an analog of Theorem \ref{thm:ex_Besov}, we want to explicitly determine a sufficient number of vanishing moments for atoms in $Co({\rm L}^p(G))$. For this purpose, first observe that for $h = \left( \begin{array}{cc} a & b \\ 0 & a^c \end{array} \right)$, the modular function of $G$ given by $\Delta_G(x,h) = \frac{\Delta_H(h)}{|{\rm det}(h)|} = |a|^{-2}$. Hence a control weight for $Co({\rm L}^p(G))$ is majorized by 
  \[
   w_0(x,h)  =  \max \left(\Delta_G(0,h)^{-1}, \Delta_G(0,h) \right) \preceq (|a|+|a|^{-1})^{2}~.   
  \] Hence Theorem \ref{thm:main}, together with the formula from \ref{thm:shearlet_str_embed} (with $d=2, c=1/2,u_1 = 2,u_2 = s= 0$) yields that for $k\ge 127$, any function $\psi \in {\rm L}^2(\mathbb{R}^2)$ with integrable derivatives of order up to $2k$ and vanishing moments of order $k$ on the coordinate axis $\{ 0 \} \times \mathbb{R}$ will be an atom for all coorbit spaces $Co({\rm L}^p(G))$, $1 \le p \le 2$. Thus a shearlet analog of \ref{thm:ex_Besov} follows from Theorem \ref{thm:at_dec}. 
 \end{remark}
 
 \begin{remark}
  For dimension $d=2$, the examples covered in this section contain a system of representatives of admissible dilation groups up to conjugacy and finite index, see \cite{FuDiss,FuCuba}. It is easy to see that passing from an admissible dilation group $H_1$ to a conjugate group $H_2 = gH_1 g^{-1}$, the open dual orbit of $H_1$ is strongly temperately embedded with respect to a weight $w_1$ on $H_1$ if and only if the open dual orbit $H_2$ is strongly temperately embedded with respect to the weight $w_2: H_2 \ni h_2 \mapsto w_1(g^{-1} h_2 g)$. Also, it is clear that the finite index condition has no effect on this property. Thus we have indeed verified that for all reasonable choices of weights on an admissible dilation group in dimension two, the condition of Theorem \ref{thm:main} is fulfilled.   
 \end{remark}

\section{Acknowledgement}
I thank Hans Feichtinger for useful comments and references, and Felix Voigtlaender for a thorough reading of an earlier draft, and many useful comments and suggestions for improvement. 
 

\bibliography{atom_van_moments.bib}
\bibliographystyle{plain}
\end{document}